\newtheorem{theorem}{Theorem}
\newtheorem{lemma}[theorem]{Lemma}
\newtheorem{remark}[theorem]{Remark}
\newcommand*{\N}{\ensuremath{\mathbb{N}}}
\newcommand*{\Z}{\ensuremath{\mathbb{Z}}}
\newcommand*{\R}{\ensuremath{\mathbb{R}}}
\newcommand*{\C}{\ensuremath{\mathbb{C}}}
\renewcommand{\i}{\mathrm{i}}
\renewcommand{\phi}{\varphi}
\renewcommand{\rho}{{\varrho}}
\renewcommand{\epsilon}{{\varepsilon}}
\renewcommand{\d}[1]{\,\mathrm{d}#1 \,}
\newcommand{\F}{\mathcal{F}} 
\newcommand{\J}{\mathcal{J}} 
\newcommand{\0}{{0}} 
\newcommand{\pert}{{\mathrm{p}}}
\newcommand{\T}{{\mathrm{T}}}
\newcommand{\B}{{\mathrm{B}}}
\renewcommand{\tilde}{\widetilde}
\newcommand{\Null}{0}
\newcommand{\W}{{W_{\hspace*{-1pt}\Lambda}}} 
\newcommand{\Wast}{{W_{\hspace*{-1pt}\Lambda^\ast}}} 
\newlength{\dhatheight}
\newcommand{\dhat}[1]{%
    \settoheight{\dhatheight}{\ensuremath{\hat{#1}}}%
    \addtolength{\dhatheight}{-0.35ex}%
    \hat{\vphantom{\rule{1pt}{\dhatheight}}%
    \smash{\hat{#1}}}}
\begin{document}

\sloppy

\title{A Floquet-Bloch Transform Based Numerical Method for Scattering from Locally Perturbed Periodic Surfaces}
\author{Armin Lechleiter\thanks{Center for Industrial Mathematics, University of Bremen
; \texttt{lechleiter@math.uni-bremen.de}} \and 
Ruming Zhang\thanks{Center for Industrial Mathematics, University of Bremen
; \texttt{rzhang@uni-bremen.de}}\ \thanks{corresponding author}}
\date{}
\maketitle

\begin{abstract}
Scattering problems for periodic structures have been studied a lot in the past few years. A main idea for numerical solution methods is to reduce such problems to one periodicity cell. In contrast to periodic settings, scattering from locally perturbed periodic surfaces is way more challenging. In this paper, we introduce and analyze a new numerical method to simulate scattering from locally perturbed periodic structures based on the Bloch transform. As this transform is applied only in periodic domains, we firstly rewrite the scattering problem artificially in a periodic domain. With the help of the Bloch transform, we secondly transform this problem into a coupled family of quasiperiodic problems posed in the periodicity cell. A numerical scheme then approximates the family of quasiperiodic solutions (we rely on the finite element method) and backtransformation provides the solution to the original scattering problem. In this paper, we give convergence analysis and error bounds for a Galerkin discretization in the spatial and the quasiperiodicity's unit cells. We also provide a simple and efficient way of implementation that does not require numerical integration in the quasiperiodicity, together with numerical examples for scattering from locally perturbed periodic surfaces computed by this scheme.  
\end{abstract}

\section{Introduction}
In this paper, we present a numerical method for solving scattering problems from locally perturbed periodic surfaces. Scattering problems for periodic or quasiperiodic incident fields from periodic structures have been well studied over at least 25 years. The common way of solving is reduction to one periodicity cell, which avoids the need for computing numerical solutions in unbounded domains. However, if such reduction fails due to non-periodicity of the incident field or the surface, one needs to seek for new approaches.

The approach we present in this paper is based on the Floquet-Bloch transform. It builds up a relationship between a non-periodic problem and a family of quasiperiodic problems reduced to one single period. With this transform, the scattering problems from periodic surfaces and non-periodic incident fields have been discussed in \cite{Lechl2015e} and \cite{Lechl2016}. Based on these theoretic results, a numerical scheme has been developed to solve these kinds of scattering problems in~\cite{Lechl2016a}. Following this type of technique, we introduce in this paper an algorithm for solving scattering problems from local perturbations of periodic surfaces that is pretty close to the one from the recent paper~\cite{Hadda2016}. Our convergence analysis is for various reasons different, as~\cite{Hadda2016} for instance strongly relies on integral equations in the spatial variable. The source of inspiration for all these techniques seems to be the paper~\cite{Coatl2012} on wave propagation in full-space periodic media. 

To briefly present our numerical approach in some detail, we firstly rely on the Floquet-Bloch transform, defined on functions living in periodic domains, and hence reformulate the locally perturbed problem by a suitable diffeomorphism between the locally perturbed and the purely periodic domain. Applying the Bloch transform to the new problem yields a family of quasiperiodic scattering problems posed in one single periodicity cell. We state the classic error analysis for finite element discretizations using low-order approximation in $\alpha$. The interesting feature of this discretization is that all integrals in the quasiperiodicity parameters can be computed by hand, such that standard solvers become attractive to tackle the full problem. By finite element discretizations for the spatial parts of the problem and, roughly, the trapezoidal rule discretizing the inverse Bloch transform, one gets a large but sparse block-linear system to solve. To this end, we use the GMRES iteration with a specially designed incomplete $LU$-decomposition as pre-conditioner for the numerical solution of the linear system. 

For Dirichlet scattering problems on perturbed periodic surfaces one can, at least in two dimensions, of course exploit the corresponding numerical convergence theory for boundary integral equation approximations from rough surface scattering theory, see, e.g.,~\cite{Meier2000, Arens2002}. There are, however, few methods specifically designed for such locally perturbed periodic scatterers. In \cite{Joly2006, Fliss2009a} and \cite{Fliss2015}, the authors give a method that approximates the Dirichlet-to-Neumann map on the transparent edges of a periodic waveguide modeled by the Helmholtz equation. Another method that uses the so-called recursive doubling procedure constructs the Sommerfeld-to Sommerfeld maps at artificial boundaries of such a waveguide, see \cite{Ehrhardt2009} and \cite{Ehrhardt2009a}. Both of these methods are motivated by the infinite half-guide and inspired by the limiting absorption principle.

This paper is organized as follows. In Section~\ref{se:scatter}, we describe the direct scattering problem corresponding to a locally perturbed periodic surface. In Sections~\ref{se:PerSca} and~\ref{se:qpScattPerturb}, we use the Bloch transform to obtain an equivalent family of quasiperiodic problems. In Section~\ref{se:invBloch} we give a discrete inverse Bloch transform and estimate the finite element method applied to the individual quasiperiodic scattering problems. The numerical implementation for the Bloch transform based method is illustrated in Section~\ref{se:NumMet}. In the last Section~\ref{se:NumEg}, several numerical examples indicate the efficiency of that method. Appendix~\ref{se:bloch} briefly introduces the Bloch transform and some of its mapping properties. 

\textit{Notation:} We denote quasiperiodic Sobolev spaces with regularity $s$ and quasiperiodicity $\alpha$ by  $H^s_\alpha$, such that $H^s_0$ denotes a \textit{periodic} Sobolev space and \emph{not} a space of functions that vanish on some boundary. Despite functions in Sobolev spaces are merely defined almost everywhere, we usually omit to write this down. Moreover, $C$ is a generic constant with value that might change from one appearance to the other. 

\section{Locally Perturbed Periodic Surface Scattering}\label{se:scatter}
In this section, we model scattering from a local perturbation $\Gamma_\pert$ of a periodic surface $\Gamma \subset \R^2$. Suppose $\Gamma:=\{{(y_1,\,\zeta(y_1))}:\,y_1\in\R\}$ is defined by a $\Lambda$-periodic Lipschitz continuous function $\zeta: \, \R \to \R$, i.e., $\zeta(x_1+\Lambda)=\zeta(x_1)$ for any $x_1\in\R$ and itself defines the periodic domain $\Omega :=\{ {(y_1,y_2)^\top}:\,y_1\in\R, \, y_2 > \zeta(y_1) \}$ above the graph of $\Gamma$. The locally perturbed periodic surface $\Gamma_\pert$ is then defined via a second Lipschitz continuous function  $\zeta_\pert : \, \R\to\R$ that satisfies 
\begin{equation}\label{eq:zetaPertDefEq} 
  \zeta_\pert (y_1) = \zeta(y_1) 
  \quad \text{for all } y_1 \not \in \left[-\frac{\Lambda}2, \frac{\Lambda}2 \right] \text{ and defines} \quad 
  \Gamma_\pert :=\{{(y_1,\,\zeta_\pert (y_1))}:\, y_1\in\R\}.
\end{equation}
We further assume without loss of generality that there is $H_0>0$ such that both Lipschitz surfaces $\Gamma_\pert$ and $\Gamma$ are included in $\R\times (0,H_0)$ and introduce the perturbed periodic domain $\Omega_\pert  := \{{(y_1,y_2)^\top}:\,y_1\in\R, \, y_2 > \zeta_\pert (y_1) \}$.
Then $\overline{\Omega}\subset\{y\in\R^2:\,y_2>0\}$ and $\overline{\Omega_\pert } \subset \{y\in\R^2:\,y_2>0\}$ as well.
For some positive number $H \geq H_0$ we further introduce truncated domains 
\begin{equation}\label{eq:OmegaH}
\Omega_H=\{y\in\Omega:\,y_2<H\}
\quad \text{and} \quad
\Omega_H^\pert =\{y\in\Omega_\pert :\,y_2<H\}.
\end{equation}
Note that we have for simplicity assumed that $\zeta\neq\zeta_\pert$ only in $(-{\Lambda}/{2},{\Lambda}/{2})$! 

The scattering problem we consider is described by the Helmholtz equation with Dirichlet boundary condition for the total wave field $u: \, \Omega_\pert  \to \C$,
\begin{equation}
\label{eq:helmholtz-tot}
\Delta u+k^2 u=0 \text{ in }\Omega_\pert ,\qquad u=0\text{ on }\Gamma_\pert ,
\end{equation}
where $k>0$ is the wavenumber and $u^i$ is the incident field. Moreover, the scattered field $u^s:=u-u^i$ satisfies the so-called angular spectrum representation,
\begin{equation}
\label{eq:URC}
u^s(x)=\frac{1}{2\pi}\int_\R e^{\i x_1\xi+\i\sqrt{k^2-|\xi|^2}(x_2-H_0)}\dhat{u}^s(\xi,H_0)\, d\xi\quad\text{ for }x_2>H_0.
\end{equation}
Here, $\sqrt{\cdot}$ is the square root extended to the complex plane by a branch cut at the negative imaginary axis (such that its real part and imaginary part are non-negative for numbers in the upper complex half-plane), and $\dhat{u}^s(\xi,H_0)$ is the Fourier transform of $u^s|_{x_2=H_0}$, i.e.,
\begin{equation}
\label{eq:fourierTrafo}
\dhat{\phi}(\xi):=\F\phi(\xi)=\frac{1}{\sqrt{2\pi}}\int_\R e^{-\i \xi x_1}\phi(x_1)\, dx_1\quad \text{ for }\phi\in \C^\infty_0(\R,\C) \text{ and } \xi\in\R,
\end{equation}
and extended by density to functions in $L^2(\R)$. Thus, we can define the exterior Dirichlet-to-Neumann map $T^+$, 
\begin{equation}\label{eq:T}
\frac{\partial u^s}{\partial x_2}(x_1,H)=\frac{i}{\sqrt{2\pi}}\int_\R\sqrt{k^2-|\xi|^2}e^{\i x_1\xi}\dhat{u}^s(\xi,H)\,  d\xi=:T^+(u^s|_{\Gamma_H})(x_1).
\end{equation}
Recall from Appendix~\ref{se:bloch} the spaces $H^{\pm 1/2}(\Gamma_H)$ and $H^1_r(\Omega_H)$ with its subspace $\tilde{H}^1_r(\Omega_H) = \{ u \in H^1_r(\Omega_H): \, u|_{\Gamma} = 0 \}$ of functions that vanish on $\Gamma$. 


The operator $T^+$ is bounded from $H^{1/2}_r(\Gamma_H)$ to $H^{-1/2}_r(\Gamma_H)$ for all $|r|<1$, see \cite{Chand2010}, and the the variational formulation of \eqref{eq:helmholtz-tot}-\eqref{eq:T} is to find $u\in \tilde{H}^1_r(\Omega_H^\pert)$ (that is, more precisely, the restriction of the total wave field to $\Omega_H$, but we omit this fact from now on) such that
\begin{equation}
\label{eq:var-tot}
\int_{\Omega_H^\pert }\left[\nabla u\cdot \nabla\overline{v}-k^2u\overline{v}\right] dx-\int_{\Gamma_H}T^+(u|_{\Gamma_H})\overline{v} \, ds=\int_{\Gamma_H}\left[\frac{\partial u^i}{\partial x_2}-T^+(u^i|_{\Gamma_H})\right]\overline{v} \, ds
\end{equation}
for all $v\in \tilde{H}^1(\Omega_H^\pert)$ with compact support in $\overline{\Omega_H}$.
Due to~\cite{Chand2010} we know that this variational problem is uniquely solvable for all $k>0$ and all bounded anti-linear right-hand sides. 

\begin{theorem}\label{eq:chand2010}
  For $|r|<1$ and any incident field $u^i \in H^1_r(\Omega_H)$, the variational problem~\eqref{eq:var-tot} possesses a unique solution $u \in \tilde{H}^1_r(\Omega_H)$. 
\end{theorem}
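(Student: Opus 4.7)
The statement is essentially a citation-style result: the plan is to reduce it to the general existence and uniqueness theory for rough surface scattering developed in~\cite{Chand2010}, and simply verify that the hypotheses of that theory apply to our locally perturbed periodic setting. The main work is not an independent well-posedness argument but a careful check that the geometric, functional-analytic, and right-hand-side hypotheses are met with the weighted spaces $\tilde{H}^1_r(\Omega_H^\pert)$ for the full range $|r|<1$.

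First I would verify the geometric hypothesis: since $\zeta$ and $\zeta_\pert$ are both Lipschitz and coincide outside $[-\Lambda/2,\Lambda/2]$, the perturbed surface $\Gamma_\pert$ is a globally Lipschitz graph contained between two parallel straight lines, which is exactly the class of surfaces treated in~\cite{Chand2010}. Second, I would rewrite the variational problem~\eqref{eq:var-tot} in the abstract form $a(u,v) = F(v)$ where
\begin{equation*}
  a(u,v) = \int_{\Omega_H^\pert}\bigl[\nabla u\cdot\nabla\overline v - k^2 u\overline v\bigr]\,dx - \int_{\Gamma_H} T^+(u|_{\Gamma_H})\,\overline v\,ds,
\end{equation*}
and check that the sesquilinear form $a$ is bounded on $\tilde H^1_r(\Omega_H^\pert)\times \tilde H^1_{-r}(\Omega_H^\pert)$, using boundedness of the volume terms in weighted $L^2$ norms and of $T^+ : H^{1/2}_r(\Gamma_H)\to H^{-1/2}_r(\Gamma_H)$ for $|r|<1$, combined with the trace theorem in the weighted setting.

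Third, I would verify the inf–sup (generalized Lax–Milgram) hypothesis. This is the step that truly needs~\cite{Chand2010}: for Dirichlet rough surface scattering on Lipschitz graphs with the angular spectrum radiation condition~\eqref{eq:URC}, that reference establishes existence and uniqueness of a solution in $\tilde H^1_r(\Omega_H^\pert)$ for $|r|<1$ and for all $k>0$, together with the associated a priori estimate. Since the local perturbation does not affect the behaviour of the geometry outside a bounded set, the hypotheses there apply verbatim to $\Omega_H^\pert$, and uniqueness follows exactly as for the unperturbed geometry.

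Fourth, I would verify that the right-hand side
\begin{equation*}
  F(v) = \int_{\Gamma_H}\Bigl[\frac{\partial u^i}{\partial x_2} - T^+(u^i|_{\Gamma_H})\Bigr]\overline v\,ds
\end{equation*}
is a bounded anti-linear functional on $\tilde H^1_{-r}(\Omega_H^\pert)$ whenever $u^i\in H^1_r(\Omega_H)$. Here the trace $u^i|_{\Gamma_H}$ lies in $H^{1/2}_r(\Gamma_H)$ and $\partial u^i/\partial x_2|_{\Gamma_H}$ in $H^{-1/2}_r(\Gamma_H)$ by standard weighted trace estimates, and the mapping property of $T^+$ again gives $T^+(u^i|_{\Gamma_H})\in H^{-1/2}_r(\Gamma_H)$. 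Pairing with the trace of $v$ in $H^{1/2}_{-r}(\Gamma_H)$ yields $|F(v)|\le C\|u^i\|_{H^1_r(\Omega_H)}\|v\|_{\tilde H^1_{-r}(\Omega_H^\pert)}$. The expected main obstacle is tracking the exact weights and dualities, since one must keep the weight $r$ on the solution side and $-r$ on the test side, and use the $T^+$ mapping property precisely in the range $|r|<1$; once this bookkeeping is in order, existence, uniqueness, and the a priori bound follow immediately from the result of~\cite{Chand2010}.
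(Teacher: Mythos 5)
Your proposal is correct and follows the same route as the paper, which proves this theorem purely by citation: the text simply states that unique solvability in $\tilde{H}^1_r$ for all $k>0$ and all bounded anti-linear right-hand sides is ``due to~\cite{Chand2010}''. Your additional verification of the Lipschitz-graph hypothesis, the $r$/$-r$ duality bookkeeping, and the boundedness of the right-hand side functional is a reasonable fleshing-out of that citation and introduces no new ideas beyond what the reference supplies.
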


\section{Quasiperiodic Surface Scattering}  \label{se:PerSca}
The (Floquet-)Bloch transform $\J_\Omega$ reduces differential equations involving periodicity to, roughly speaking, quasiperiodic scattering problems from the unit cell of the periodic structure. 
Before exploiting this reduction, we need to recall some results on non-perturbed periodic scattering. In this section, all proofs are omitted and we refer to Appendix~\ref{se:bloch} and the references therein.

For an incident solution $u^i$ to the Helmholtz equation $\Delta u^i + k^2 u^i = 0$ in $\Omega$, the Dirichlet scattering problem from the periodic surface $\Gamma$ defined in the last section is described  for the total wave field $u: \, \Omega \to \C$ as in~\eqref{eq:var-tot} by 
\begin{equation}
\label{eq:ScaPer}
\Delta u+k^2u=0\text{ in }\Omega,\qquad u=0\text{ on }\Gamma,
\end{equation}
subject to the radiation condition \eqref{eq:URC} for the restriction of $u^s := u - u^i$ to $\Gamma_H$ for some $H>H_0$. The variational formulation of this problem is hence to find a solution $u \in \tilde{H}^1_r (\Omega_H)$ to~\eqref{eq:var-tot} with $\Omega_H^\pert $ replaced by $\Omega_H$, that is, 
\begin{equation}\label{eq:varFormHEScal}
  \int_{\Omega_H} \left[ \nabla u \cdot \nabla \overline{v} - k^2 u \,\overline{v} \right] \d{x}
  - \int_{\Gamma_H}  T^+ [ u ]\big|_{\Gamma_H} \overline{v} \, ds
  = \int_{\Gamma_H} \left[ \frac{\partial u^i}{\partial x_2} - T^+ [u^i]\big|_{\Gamma_H} \right] \overline{v} \, ds
\end{equation}
for all $v \in \tilde{H}^1_r (\Omega_H)$ with compact support in $\overline{\Omega_H}$.

The Bloch transform of a solution $u$ to the surface scattering problem~\eqref{eq:varFormHEScal} involving the periodic surface $\Gamma$ solves a corresponding quasiperiodic scattering problem.
To introduce the corresponding variational formulation, we recall the Wigner-Seitz cell $\W = (-\Lambda/2,\Lambda/2]$ of periodicity $\Lambda$ and the periodic sets 
\[
  \Omega_H^\Lambda = \{ x \in \Omega_H: \, x_1 \in \W \}, 
  \quad 
  \Gamma_\Lambda = \{ x \in \Gamma: \, x_1 \in \W \},
  \text{ and} \quad 
  \Gamma_H^\Lambda = \{ x \in \Gamma_H: \, x_1 \in \W \},
\]
as well as Sobolev spaces $\tilde{H}^1_\alpha(\Omega_H^\Lambda)$, $H^{s}_\alpha(\Gamma_H^\Lambda)$, and $H^r_\0(\Wast; \tilde{H}^1_\alpha(\Omega_H^\Lambda))$ for quasiperiodicity $\alpha \in \Wast = (-\Lambda^\ast,\Lambda^\ast] := (-\pi/\Lambda, \pi/\Lambda]$ from Appendix~\ref{se:bloch}; $\Wast = (-\pi/\Lambda, \pi/\Lambda]$ is the so-called Brillouin zone. 
We first rely on a well-known periodic Dirichlet-to-Neumann operator $T_\alpha^+$ on $\Gamma_H^\Lambda\subset \Gamma_H$ that is continuous from $H^{s+1}_\alpha(\Gamma_H^\Lambda)$ into $H^{s}_\alpha(\Gamma_H^\Lambda)$ for all $s\in \R$,
\begin{equation}
   T_\alpha^+ \left( \varphi \right)\Big|_{\Gamma_H^\Lambda}
   = \bigg[\i \sum_{j\in\Z} \sqrt{k^2 - |\Lambda^\ast j - \alpha|^2} \, \hat{\varphi}(j) \, e^{\i (\Lambda^\ast j-\alpha)  x_1} \bigg]\bigg|_{\Gamma_H^\Lambda}
   \quad \text{for }
   \varphi = \sum_{j\in\Z} \hat{\varphi}(j) e^{\i (\Lambda^\ast j - \alpha)  x_1}.
\end{equation}
Obviously, $T_\alpha^+$ is a periodic version of the operator $T^+$ from~\eqref{eq:T}.
Second, we introduce a bounded sesqui-linear form $a_\alpha$ on $\tilde{H}^1_\alpha(\Omega_H^\Lambda) \times \tilde{H}^1_\alpha(\Omega_H^\Lambda)$ corresponding to the Helmholtz equation with Dirichlet boundary condition on $\Gamma$,
\[
  a_\alpha(w,v)
  := \int_{\Omega_H^\Lambda} \Big[ \nabla_x w \cdot \nabla_x \overline{v} - k^2 w(\alpha, \cdot) \, \overline{v} \Big] \d{x}
  - \int_{\Gamma_H^\Lambda} T_\alpha^+ \left( w|_{\Gamma_H^\Lambda} \right) \, \overline{v}|_{\Gamma_H^\Lambda} \\, ds,
\]
and state an equivalence result that can be shown along the lines of Theorem~9 in~\cite{Lechl2016}.

\begin{theorem}\label{th:equiScalarPerio}
Suppose $u^i$ belongs to $H^1_r(\Omega_H)$ for some $r \in [0,1)$. 
Then a function $u \in \tilde{H}^1_r(\Omega_H^\Lambda)$ solves~\eqref{eq:varFormHEScal} if and only if $w := \J_\Omega u \in H^r_\0(\Wast; \tilde{H}^1_\alpha(\Omega_H^\Lambda))$ solves
\begin{equation}\label{eq:heAlpha}
  a_\alpha(w(\alpha,\cdot),v)
  = \int_{\Gamma_H^\Lambda} f_\alpha \overline{v} \\, ds \quad\text{ for }  f_\alpha= \frac{\partial }{\partial \nu} \J_\Omega u^i (\alpha,\cdot)- T_\alpha^+ \left[\J_\Omega u^i (\alpha,\cdot)\big|_{\Gamma_H^\Lambda} \right]
\end{equation}
for all $v \in \tilde{H}^1_\alpha(\Omega_H^\Lambda)$ and almost every $\alpha \in \Wast$.
\end{theorem}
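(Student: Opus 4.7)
The plan is to prove the equivalence in both directions by transferring the variational identity \eqref{eq:varFormHEScal} through the Bloch transform $\J_\Omega$, exploiting two facts: first, that $\J_\Omega$ is an isometric isomorphism between $H^r_r(\Omega_H)$ and $H^r_\0(\Wast;\tilde H^1_\alpha(\Omega_H^\Lambda))$ (cf.\ Appendix~\ref{se:bloch}); second, that the exterior Dirichlet-to-Neumann operator $T^+$ is intertwined with the quasiperiodic operators $T_\alpha^+$ by the formula
\[
  \J_{\Gamma_H} \bigl(T^+\varphi\bigr)(\alpha,\cdot) = T_\alpha^+\bigl(\J_{\Gamma_H}\varphi(\alpha,\cdot)\bigr),
\]
which one reads off directly from the explicit Fourier symbols in \eqref{eq:T} and in the definition of $T_\alpha^+$ after identifying the Fourier variable $\xi = \Lambda^\ast j - \alpha$.

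For the forward direction, I would start from \eqref{eq:varFormHEScal} with a test function of the specific form $v = \J_\Omega^{-1}(\phi\, \psi)$ for $\phi\in C^\infty(\Wast)$ and $\psi\in\tilde H^1_\alpha(\Omega_H^\Lambda)$; such functions form a dense subspace of the admissible test functions with compact support. Applying Plancherel-type identities for $\J_\Omega$ and $\J_{\Gamma_H}$ turns the three integrals in \eqref{eq:varFormHEScal} into iterated integrals over $\Wast$ and $\Omega_H^\Lambda$ (respectively $\Gamma_H^\Lambda$); using the intertwining property above for the $T^+$-term and commuting $\J_\Omega$ with $\nabla_x$, each integrand depends on $\alpha$ through $a_\alpha(w(\alpha,\cdot),\psi)$ and through $\int_{\Gamma_H^\Lambda} f_\alpha\bar\psi\,ds$. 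Since $\phi$ is arbitrary, the fundamental lemma of calculus of variations (in the $\alpha$ variable) yields \eqref{eq:heAlpha} for almost every $\alpha\in\Wast$.

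For the reverse direction, the steps are run backward: assuming \eqref{eq:heAlpha} holds for a.e.\ $\alpha$, I would multiply by $\bar\phi(\alpha)$, integrate over $\Wast$, and then reassemble the iterated integrals via Plancherel into the non-periodic form \eqref{eq:varFormHEScal}, using the density of tensor products $\phi\otimes\psi$ in the space of admissible test functions in $\tilde H^1_r(\Omega_H)$ with compact support. Uniqueness of $u$ (Theorem~\ref{eq:chand2010} applied to the unperturbed periodic surface) and boundedness of $\J_\Omega^{-1}$ ensure that no information is lost.

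The main obstacle I anticipate is a technical one rather than conceptual: verifying carefully that tensor-product test functions of the form $\phi\otimes\psi$, lifted by $\J_\Omega^{-1}$, are in fact dense in the space of admissible compactly supported test functions for~\eqref{eq:varFormHEScal}, given that the quasiperiodicity parameter of $\psi$ must match $\alpha$ pointwise. One resolves this by choosing $\psi\in\tilde H^1_0(\Omega_H^\Lambda)$ (the periodic case) and then modulating by $e^{-\i\alpha x_1}$ when necessary, or equivalently by working with the representation of the Bloch transform provided in Appendix~\ref{se:bloch}; this is precisely the device used in Theorem~9 of~\cite{Lechl2016}, and the present argument follows that pattern after replacing the periodic domain there with $\Omega_H$ and the corresponding function spaces with $\tilde H^1_r(\Omega_H)$.
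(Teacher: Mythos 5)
Your proposal is correct and follows essentially the route the paper intends: the paper omits the proof and states that the result ``can be shown along the lines of Theorem~9 in~\cite{Lechl2016}'', which is precisely the argument you reconstruct --- the isomorphism property of $\J_\Omega$ from Theorem~\ref{th:BlochOmega}, the Fourier-symbol intertwining of $T^+$ with $T_\alpha^+$, testing with inverse Bloch transforms of tensor products $\phi\otimes\psi$, and the fundamental lemma in $\alpha$. Only two small imprecisions are worth fixing: the source space should read $\tilde H^1_r(\Omega_H)$ rather than $H^r_r(\Omega_H)$ (and $\J_\Omega$ is an isometry only for $s=r=0$), and the functions $\J_\Omega^{-1}(\phi\otimes\psi)$ are not themselves compactly supported, so one should instead extend \eqref{eq:varFormHEScal} by continuity from compactly supported test functions to all of $\tilde H^1_{-r}(\Omega_H)$ before inserting them.
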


The last theorem's assumption that $u^i$ belongs to $H^1_r(\Omega_H)$ for some $r \geq 0$ is in two dimensions satisfied, e.g., for (non-periodic) point sources or Herglotz wave functions with, roughly speaking, vanishing horizontal part, see~\cite{Lechl2016a}. The periodic scattering problem is always uniquely solvable if, e.g., $\Gamma$ is graph of a Lipschitz function, see~\cite{Bonne1994, Elsch2002}. 

\begin{lemma}\label{th:uniqueAlpha}
If $\Gamma$ is graph of a Lipschitz continuous function, then~\eqref{eq:heAlpha} is solvable for all $(k_\ast,\alpha) \in (0,\infty) \times \Wast$ and the solution operators $A_\alpha$ are uniformly bounded in $\alpha \in \Wast$.
\end{lemma}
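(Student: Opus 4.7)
The plan is to combine unique solvability of~\eqref{eq:heAlpha} for each fixed $\alpha \in \Wast$, which reduces to classical quasiperiodic scattering theory on a periodic Lipschitz graph, with a continuity-and-compactness argument on the Brillouin zone to obtain the uniform bound on $A_\alpha$.

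First I would verify a G\aa rding inequality for $a_\alpha$ on $\tilde{H}^1_\alpha(\Omega_H^\Lambda)$ with constants independent of $\alpha \in \Wast$. Splitting $T_\alpha^+ = T_{\alpha,\mathrm{prop}}^+ + T_{\alpha,\mathrm{ev}}^+$ into a finite sum over the propagating modes (those $j \in \Z$ with $|\Lambda^\ast j - \alpha|^2 < k^2$) and the evanescent remainder, the real part of $-\int_{\Gamma_H^\Lambda} T_{\alpha,\mathrm{ev}}^+(\phi)\overline{\phi}\,ds$ is non-negative, while the number of propagating indices $j$ is uniformly bounded in terms of $k$ and $\Lambda$. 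Hence $\Re a_\alpha(w,w) + \|w\|_{L^2(\Omega_H^\Lambda)}^2$ controls $\|w\|_{\tilde{H}^1_\alpha(\Omega_H^\Lambda)}^2$ from below, while the $k^2$-mass term and the finite-dimensional $T_{\alpha,\mathrm{prop}}^+$-correction are compact perturbations. Uniqueness of the homogeneous quasiperiodic problem on a periodic Lipschitz graph is classical, see~\cite{Bonne1994,Elsch2002}, so the Fredholm alternative yields a bounded inverse $A_\alpha$ at every $(k,\alpha) \in (0,\infty)\times \Wast$.

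Second, to pass from pointwise to uniform boundedness I would transfer~\eqref{eq:heAlpha} to an $\alpha$-independent periodic space through the isometry $w \mapsto e^{\i\alpha x_1}w$ from $\tilde{H}^1_\alpha(\Omega_H^\Lambda)$ onto $\tilde{H}^1_0(\Omega_H^\Lambda)$. The pulled-back sesqui-linear form is a polynomial in $\alpha$ for its volumetric part, while every summand of the transferred DtN symbol carries a coefficient $\i\sqrt{k^2 - |\Lambda^\ast j - \alpha|^2}$ which is continuous in $\alpha$, losing analyticity only at the discrete set of Wood anomalies $\{\alpha \in \Wast : k^2 = |\Lambda^\ast j - \alpha|^2 \text{ for some } j \in \Z\}$, where the coefficient itself vanishes. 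Combined with Fredholm theory and the already-established unique solvability at every $\alpha$, this gives continuity of $\alpha \mapsto A_\alpha$ in the operator norm on the compact torus $\Wast \cong \R/(2\Lambda^\ast\Z)$, and hence $\sup_{\alpha \in \Wast}\|A_\alpha\| < \infty$.

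The main obstacle I anticipate is precisely the continuity of $A_\alpha$ across the Wood-anomaly set, where the DtN symbol loses its smooth dependence on $\alpha$. Since unique solvability of the homogeneous problem still holds at every such exceptional value, I would argue by contradiction: if some sequence $\alpha_n \to \alpha_\ast$ produced $\|A_{\alpha_n}\| \to \infty$, then normalising the right-hand sides and extracting a weakly convergent subsequence of the corresponding solutions would, via compact embeddings and the Fredholm framework, yield a nontrivial homogeneous solution at $\alpha_\ast$, contradicting uniqueness at $\alpha_\ast$.
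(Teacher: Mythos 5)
The paper does not actually prove this lemma: Section~\ref{se:PerSca} states explicitly that all proofs there are omitted, and the text surrounding the lemma simply refers to~\cite{Bonne1994, Elsch2002} for unique solvability of the quasiperiodic problem on a Lipschitz graph (and to Theorem~9 of~\cite{Lechl2016} for the transform machinery). Your argument is correct and is essentially the standard route those references take: a G\r{a}rding inequality with $\alpha$-independent constants via the propagating/evanescent splitting of $T_\alpha^+$, Fredholm alternative plus classical uniqueness to get invertibility at each $\alpha$, and then continuity of the (periodized) family $\alpha \mapsto a'_\alpha$ on the compact torus $\Wast$ to upgrade pointwise to uniform bounds. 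Two small refinements: the real part of the propagating contribution $-\int_{\Gamma_H^\Lambda} T^+_{\alpha,\mathrm{prop}}(\phi)\overline{\phi}\,ds$ is in fact exactly zero (the coefficients $\i\sqrt{k^2-|\Lambda^\ast j-\alpha|^2}$ are purely imaginary there), so the G\r{a}rding inequality $\Re a_\alpha(w,w) \geq \|w\|^2_{H^1} - (k^2+1)\|w\|^2_{L^2}$ holds with no finite-rank correction needed; and across the Wood anomalies the multiplier $\alpha \mapsto \sqrt{k^2-|\Lambda^\ast j-\alpha|^2}$ is only H\"older-$1/2$ rather than Lipschitz, but since the difference of symbols is bounded uniformly in $j$ this still yields norm-continuity of $\alpha\mapsto \tilde T^+_\alpha$ from $H^{1/2}_0(\Gamma_H^\Lambda)$ to $H^{-1/2}_0(\Gamma_H^\Lambda)$, which is all your compactness step requires. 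Your fallback contradiction argument is likewise sound, though redundant once norm-continuity of the inverse on the compact parameter torus is in hand.
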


The solution $w=w(\alpha,\cdot)$ does generally not belong to $H^1_\0(\Wast; \tilde{H}^1(\Omega_H^\Lambda))$, which follows actually already from~\cite{Chand2010}.

\begin{theorem} \label{th:exiSolScalPerio}
Assume that $\Gamma$ is graph of a Lipschitz continuous function. 
If $u^i \in H^1_r(\Omega_H)$ for $r\in [0,1)$, then the solution $w=w(\alpha,\cdot)$ to~\eqref{eq:heAlpha} belongs to $H^r_\0(\Wast; \tilde{H}^1_\alpha(\Omega_H^\Lambda))$ and the solution $u = \J_\Omega^{-1} w$ to~\eqref{eq:varFormHEScal} belongs to $\tilde{H}^{1}_r(\Omega_H)$. If $r>1/2$, then $\alpha \mapsto w(\alpha,\cdot)$ is continuous from $\Wast$ into $\tilde{H}^1_\alpha(\Omega_H^\Lambda)$. 
\end{theorem}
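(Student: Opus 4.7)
The plan is to reduce everything to the classical well-posedness result of~\cite{Chand2010} for the purely periodic scattering problem~\eqref{eq:varFormHEScal}, combined with the mapping properties of the Floquet--Bloch transform $\J_\Omega$ recalled in Appendix~\ref{se:bloch}. First, since $\Gamma$ is a Lipschitz graph and $u^i \in H^1_r(\Omega_H)$ with $r \in [0,1)$, the Chandler--Wilde theory (the periodic specialization of Theorem~\ref{eq:chand2010}) provides a unique solution $u \in \tilde{H}^1_r(\Omega_H)$ to~\eqref{eq:varFormHEScal}.

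The Bloch transform $\J_\Omega$ is, as recalled in the appendix, an isomorphism between $\tilde{H}^1_r(\Omega_H)$ and $H^r_\0(\Wast; \tilde{H}^1_\alpha(\Omega_H^\Lambda))$. Applying it to this $u$ yields $w := \J_\Omega u$ in the latter space, with norm controlled by $\|u\|_{\tilde{H}^1_r(\Omega_H)}$. By Theorem~\ref{th:equiScalarPerio}, this $w$ satisfies the quasiperiodic variational problem~\eqref{eq:heAlpha} for almost every $\alpha \in \Wast$, and Lemma~\ref{th:uniqueAlpha} guarantees that this is the unique solution there, namely $w(\alpha,\cdot) = A_\alpha f_\alpha$. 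This establishes both regularity statements simultaneously and in the natural way.

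For the continuity claim when $r > 1/2$, I would invoke the one-dimensional Sobolev embedding $H^r(\Wast) \hookrightarrow C(\overline{\Wast})$. To make sense of continuity with values in the $\alpha$-dependent space $\tilde{H}^1_\alpha(\Omega_H^\Lambda)$, I would use the standard trivialization $W(\alpha,x) := e^{-\i\alpha x_1} w(\alpha,x)$, which identifies $\tilde{H}^1_\alpha(\Omega_H^\Lambda)$ with the fixed periodic space $\tilde{H}^1_\0(\Omega_H^\Lambda)$; the image lies in $H^r(\Wast;\tilde{H}^1_\0(\Omega_H^\Lambda))$ and is therefore continuous by the embedding. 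Multiplication by $e^{\i\alpha x_1}$ is continuous in $\alpha$ in the natural sense, so undoing the trivialization recovers the asserted continuity of $\alpha \mapsto w(\alpha,\cdot)$. The main subtlety throughout is the careful treatment of the $\alpha$-dependent fiber spaces, which the isomorphism property of $\J_\Omega$ combined with the $e^{\pm\i\alpha x_1}$-trivialization handles cleanly.
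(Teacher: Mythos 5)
Your proposal is correct and follows essentially the same route as the paper: Chandler--Wilde well-posedness gives $u \in \tilde{H}^1_r(\Omega_H)$, the isomorphism property of $\J_\Omega$ places $w = \J_\Omega u$ in $H^r_\0(\Wast; \tilde{H}^1_\alpha(\Omega_H^\Lambda))$, and the one-dimensional Sobolev embedding yields continuity in $\alpha$ for $r>1/2$. Your $e^{\pm\i\alpha x_1}$-trivialization of the $\alpha$-dependent fibers is a slightly more explicit rendering of the paper's remark that the norm of $\tilde{H}^1_\alpha(\Omega_H^\Lambda)$ coincides with that of $\tilde{H}^1(\Omega_H^\Lambda)$, but the argument is the same.
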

\begin{proof}
We merely show the continuity result: 
Reference~\cite{Chand2010} states that the solution $u$ to~\eqref{eq:ScaPer} belongs to $\tilde{H}^1_r(\Omega_H)$ if the incident field decays as indicated for $r \in [0,1)$ (even for $r \in (-1,1)$). 
The transformed solution $w = \J_\Omega u$ hence belongs to $H^r_\0(\Wast; \tilde{H}^1_\alpha(\Omega_H^\Lambda))$.
If $r>1/2$, such functions are continuous in $\alpha$ due to Sobolev's embedding theorem (or Morrey's estimate) in one dimension (see~\cite{Evans1998, Lechl2016a}), such that $\alpha \mapsto w(\alpha,\cdot)$ is continuous from $\Wast$ into $\tilde{H}^1_\alpha(\Omega_H^\Lambda) \subset \tilde{H}^1(\Omega_H^\Lambda)$. 
(The norm in $\tilde{H}^1_\alpha(\Omega_H^\Lambda)$ is simply the norm of $\tilde{H}^1(\Omega_H^\Lambda)$!) 
In particular, the evaluations $w(\alpha,\cdot)$ in $\tilde{H}^1_\alpha(\Omega_H^\Lambda)$ depend continuously on $\alpha$. 
\end{proof}

\section{Periodized Quasiperiodic Scattering Problems}\label{se:qpScattPerturb}
Now we start to analyze scattering problems from locally perturbed periodic surfaces based on the Bloch transform from Appendix~\ref{se:bloch} and our knowledge on quasiperiodic scattering from Section \ref{se:PerSca}. As the variational formulation \eqref{eq:var-tot} of the locally perturbed periodic surface scattering problem is set in the non-periodic space $\tilde{H}^1(\Omega_H^\pert)$, we have to transform it into a problem formulated in the periodic domain $\Omega_H$. 

To this end, we use the diffeomorphism $\Phi_\pert$ from $\Omega_H$ into $\Omega_H^\pert$, defined by
\begin{equation}\label{eq:phiPertDef}
  \Phi_\pert:\, x\mapsto \left(x_1,x_2+\frac{(x_2-H)^3}{(\zeta(x_1)-H)^3}(\zeta_p(x_1)-\zeta(x_1)\right).
\end{equation}
%
%
The support of $\Phi_\pert-I$ is contained in $\Omega_H^\Lambda$ as the support of $\zeta_\pert - \zeta$ is by assumption included in $[- \Lambda/2, \, \Lambda/2]$, too, see~\eqref{eq:zetaPertDefEq}. The transformed total field $u_\T = u\circ\Phi_\pert\in {\tilde{H}^1_r(\Omega_H)}$ then satisfies by the transformation theorem the following variational problem in $\Omega_H$, 
\begin{equation}
\label{eq:var-tra}
\int_{\Omega_H} \hspace*{-1mm} \left[A_\pert \nabla u_\T \cdot \nabla\overline{v_\T} - k^2c_\pert \, u_\T\overline{v_\T} \right] dx 
- \int_{\Gamma_H} \hspace*{-1mm} T^+(u_\T|_{\Gamma_H})\overline{v_\T} \, ds 
= \int_{\Gamma_H} \hspace*{-1mm} \left[\frac{\partial u^i}{\partial x_2}-T^+(u^i|_{\Gamma_H})\right]\overline{v_\T} \, ds
\end{equation}
for all $v_\T \in \tilde{H}^1(\Omega_H)$ with compact support in $\overline{\Omega_H}$ and coefficients 
\begin{align*}
A_\pert (x)&:=\big|\det\nabla\Phi_\pert(x)\big|\big[(\nabla\Phi_\pert(x))^{-1}((\nabla\Phi_\pert(x))^{-1})^T\big] \in L^\infty(\Omega_H,\R^{2\times  2}),\\
c_\pert (x)&:=\big|\det\nabla\Phi_\pert(x)\big|\in L^\infty(\Omega_H).
\end{align*}
We reformulate \eqref{eq:var-tra} by applying the inverse Bloch transform composed with the Bloch transform to the weak solution $u_\T$. As $\nabla \Phi_\pert = I$ outside $\Omega_H^\pert $ there holds that $A_\pert -I$ and $c_\pert -1$ are both supported in $\Omega_H^\Lambda$, and an explicit computation shows that the Bloch transform of $(A_\pert -I)\nabla u_\T$ equals to $(\Lambda/2\pi)^{1/2}(A_\pert -I)\nabla u_\T$ in the space $L^2(\Omega_H)^2$, and the Bloch transform of $(c_\pert -1)u_\T$ is $(\Lambda/2\pi)^{1/2}(c_\pert -1) u_\T$ in $L^2(\Omega_H)$. (Despite, both functions have compact support in $\Omega_H^\Lambda$.) 

If we assume that the incident field $u^i$ belongs to $H^1_r(\Omega_H)$ for some $r \in [0,1)$, then the Bloch transform $w_\B = \J_\Omega u_\T$ belongs to $L^2(\Wast;\tilde{H}^1_{\alpha}(\Omega_H^\Lambda))$ and satisfies for all test functions $v_\B \in L^2(\Wast;\tilde{H}^1_{\alpha}(\Omega_H^\Lambda))$ that 
\begin{equation} \label{eq:var-traBloch}
\begin{split}
\int_\Wast & a_{\alpha} (w_\B(\alpha,\cdot),v_\B(\alpha,\cdot)) \, d\alpha 
+ \left[\frac{\Lambda}{2\pi}\right]^{1/2} \int_{\Omega_H^\Lambda} (A_\pert -I)\nabla \big(\J_\Omega^{-1} w_\B \big) \cdot \nabla \big(\overline{\J_\Omega^{-1} v_\B} \big)\, dx \\
& - k^2 \left[\frac{\Lambda}{2\pi}\right]^{1/2}\int_{\Omega_H^\Lambda} (c_\pert -1) \J_\Omega^{-1} w_\B  \, \overline{\J_\Omega^{-1} v_\B} \, dx 
  = \int_\Wast \int_{\Gamma_H^\Lambda} f(\alpha,\cdot) \, \overline{v_\B}(\alpha,\cdot) \, ds \, d\alpha, 
\end{split}
\end{equation}
for the right-hand side $f \in H^r_0(\Wast;H^{-1/2}_\alpha(\Gamma_H^\Lambda))$ with $f(\alpha,\cdot)  \in H^{-1/2}_\alpha(\Gamma_H^\Lambda)$ given by 
\begin{equation}\label{eq:heAlphaf}
  f(\alpha,\cdot) = \frac{\partial \J_\Omega u^i(\alpha,\cdot)}{\partial x_2} - T^+_\alpha\left[(\J_\Omega u^i)(\alpha,\cdot)|_{\Gamma_H^\Lambda}\right].
\end{equation} 
The corresponding coupled strong formulation is 
\begin{equation*} 
\Delta_x w_\B (\alpha,\cdot)+k^2w_\B (\alpha,\cdot)= - \left[\frac{\Lambda}{2\pi}\right]^{1/2}\Big[\nabla\cdot\left[(A_\pert -I)\nabla \big(\J_\Omega^{-1} w_\B\big) \right]+k^2(c_\pert -1) \big(\J_\Omega^{-1} w_\B\big) \Big]
\end{equation*}
with boundary conditions $w_\B(\alpha,\cdot)=0$ on $\Wast\times\Gamma^\Lambda$ and $\partial w_\B(\alpha,\cdot)/\partial x_2 - T^+_{\alpha}\big[w_\B(\alpha,\cdot)|_{\Gamma^\Lambda_H}\big] = \partial (\J_\Omega u^i (\alpha,\cdot))/\partial x_2 - T^+_{\alpha}\big[(\J_\Omega u^i)(\alpha,\cdot)|_{\Gamma^\Lambda_H}\big]$ on $\Wast\times\Gamma^\Lambda_H$. 

\begin{theorem}\label{th:equivalent}
Assume that the incident field $u^i$ belongs to $H^1_r(\Omega_H)$ for some $r\in [0,1)$. Then $u_\T \in\tilde{H}^1_r(\Omega_H)$ satisfies \eqref{eq:var-tra} if and only if $w_\B = \J_\Omega u_\T \in H^r_\0(\Wast;\tilde{H}^1_\alpha(\Omega_H^\Lambda))$ satisfies \eqref{eq:var-traBloch}. 
\end{theorem}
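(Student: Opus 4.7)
The plan is to transform every term of \eqref{eq:var-tra} to the Bloch side, and conversely, by exploiting that $A_\pert-I$ and $c_\pert-1$ are supported in the single periodicity cell $\Omega_H^\Lambda$. I would split the sesqui-linear form of \eqref{eq:var-tra} into an \emph{unperturbed} periodic part (coefficients $I$ and $1$, together with the trace $T^+$) and a compactly supported \emph{perturbation} part coming from $A_\pert-I$ and $c_\pert-1$. Since $\J_\Omega$ is an isomorphism from $\tilde H^1_r(\Omega_H)$ onto $H^r_\0(\Wast;\tilde H^1_\alpha(\Omega_H^\Lambda))$ by the mapping properties recalled in the appendix, the correspondence $u_\T\leftrightarrow w_\B=\J_\Omega u_\T$ already takes care of the regularity claim; the content of the theorem is the transformation of the variational identity.

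For the direction $u_\T\Rightarrow w_\B$, I would fix $v_\B\in L^2(\Wast;\tilde H^1_\alpha(\Omega_H^\Lambda))$, set $v_\T:=\J_\Omega^{-1}v_\B\in\tilde H^1(\Omega_H)$, and approximate $v_\T$ by functions with compact support in $\overline{\Omega_H}$ so that they become legal test functions for \eqref{eq:var-tra}. The unperturbed periodic integrals together with the right-hand side $\int_{\Gamma_H}[\partial u^i/\partial x_2-T^+(u^i|_{\Gamma_H})]\overline{v_\T}\,ds$ transform exactly as in Theorem~\ref{th:equiScalarPerio}: Parseval for $\J_\Omega$ applied to the $\Omega_H$ integrals, combined with the block-diagonalisation of $T^+$ into the family $\{T^+_\alpha\}_{\alpha\in\Wast}$ on the boundary integral, yields $\int_\Wast a_\alpha(w_\B(\alpha,\cdot),v_\B(\alpha,\cdot))\,d\alpha$ on the left and $\int_\Wast\int_{\Gamma_H^\Lambda}f(\alpha,\cdot)\overline{v_\B(\alpha,\cdot)}\,ds\,d\alpha$ on the right, with $f$ as in \eqref{eq:heAlphaf}. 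For the perturbation integrals $\int_{\Omega_H^\Lambda}[(A_\pert-I)\nabla u_\T\cdot\nabla\overline{v_\T}-k^2(c_\pert-1)u_\T\overline{v_\T}]\,dx$ I would insert the identities $\J_\Omega[(A_\pert-I)\nabla u_\T]=(\Lambda/2\pi)^{1/2}(A_\pert-I)\nabla u_\T$ and $\J_\Omega[(c_\pert-1)u_\T]=(\Lambda/2\pi)^{1/2}(c_\pert-1)u_\T$ quoted in the text preceding the theorem (both stemming from the support of $A_\pert-I$ and $c_\pert-1$ in one Wigner--Seitz cell), and then apply Parseval once more to identify these integrals with the bracketed middle line of \eqref{eq:var-traBloch}. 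Summing the periodic and perturbative identifications produces \eqref{eq:var-traBloch}.

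The reverse implication $w_\B\Rightarrow u_\T$ is the mirror image: for any $v_\T\in\tilde H^1(\Omega_H)$ with compact support in $\overline{\Omega_H}$, one inserts $v_\B:=\J_\Omega v_\T$ into \eqref{eq:var-traBloch} and reverses every step of the previous calculation. The step I expect to be most delicate is the alignment of the two test-function classes — functions compactly supported in $\overline{\Omega_H}$ on the physical side versus the full space $L^2(\Wast;\tilde H^1_\alpha(\Omega_H^\Lambda))$ on the Bloch side — because this requires both density of the $\J_\Omega$-images of compactly supported functions in $L^2(\Wast;\tilde H^1_\alpha(\Omega_H^\Lambda))$ and a continuous extension of both sides of \eqref{eq:var-tra} from the compactly supported class to all of $\tilde H^1(\Omega_H)$. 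The boundedness of $T^+$ between weighted trace spaces quoted from~\cite{Chand2010} right after \eqref{eq:T} is the key ingredient for controlling the boundary pairing in this extension. Once that density step is in place, everything else is routine bookkeeping using Parseval for $\J_\Omega$ and the trivial action of $\J_\Omega$ on functions supported in a single periodicity cell.
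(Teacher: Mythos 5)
Your proposal is correct and follows essentially the same route as the paper: the paper's own (two-line) proof simply cites ``the arguments before \eqref{eq:var-traBloch}'' --- i.e.\ the splitting into the periodic part handled by Theorem~\ref{th:equiScalarPerio} plus the trivial Bloch transform of the compactly supported perturbation terms --- for the forward direction, and the adjoint property $\J_\Omega^{-1}=\J_\Omega^{\ast}$ (your Parseval step) for the converse. Your elaboration, including the remark on aligning the two test-function classes by density, just makes explicit what the paper leaves implicit.
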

\begin{proof}
From the arguments before \eqref{eq:var-traBloch}, it is easy to see that if $u_\T$ satisfies \eqref{eq:var-tra}, then $w_\B$ solves \eqref{eq:var-traBloch}.
If $w_\B \in H^r_\0(\Wast;\tilde{H}^1_\alpha(\Omega_H^\Lambda))$ satisfies \eqref{eq:var-traBloch}, then the property $\J_\Omega^{-1}=\J_\Omega^*$ implies that $u_\T = \J_\Omega^{-1} u_\B \in\tilde{H}^1_r(\Omega_H)$ satisfies \eqref{eq:var-tra}.
\end{proof}

We next consider unique solvability of~\eqref{eq:var-traBloch}.

\begin{theorem}\label{th:RegQuasi}
If $\Gamma_\pert$ is graph of a Lipschitz continuous function, then~\eqref{eq:var-traBloch} is uniquely solvable in $H^r_\0(\Wast; \tilde{H}^1_\alpha(\Omega_H^\Lambda))$ for all incident fields $u^i \in H^1_r(\Omega_H)$ for $r \in [0,1)$.
\end{theorem}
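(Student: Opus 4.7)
My plan is to chain three results together: Theorem~\ref{th:equivalent}, a pull-back argument for the diffeomorphism $\Phi_\pert$, and Theorem~\ref{eq:chand2010}. Concretely, by Theorem~\ref{th:equivalent} the assertion is equivalent to showing that \eqref{eq:var-tra} has a unique solution $u_\T \in \tilde{H}^1_r(\Omega_H)$. Since \eqref{eq:var-tra} is precisely the image of \eqref{eq:var-tot} under the change of variables induced by $\Phi_\pert$, I would relate the two problems via the composition $u \mapsto u \circ \Phi_\pert$ and then invoke Theorem~\ref{eq:chand2010} to conclude.

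First I would establish that the pull-back operator $S_\pert: v \mapsto v \circ \Phi_\pert$ is a bounded isomorphism from $\tilde{H}^1_r(\Omega_H^\pert)$ onto $\tilde{H}^1_r(\Omega_H)$ with inverse $u_\T \mapsto u_\T \circ \Phi_\pert^{-1}$. The key point is that $\Phi_\pert - I$ is supported in the compact set $\overline{\Omega_H^\Lambda}$, so on this set the weight $(1+|x_1|^2)^{r/2}$ is bounded above and below and the weighted norm is equivalent to the plain $H^1$ norm; outside the support both $\Phi_\pert$ and $\Phi_\pert^{-1}$ reduce to the identity, so the pull-back trivially preserves the weighted norm. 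The standard chain rule together with the $L^\infty$ bounds on $\nabla \Phi_\pert$ and $\nabla \Phi_\pert^{-1}$ (which follow from the Lipschitz assumption on $\zeta$ and $\zeta_\pert$ and the explicit formula~\eqref{eq:phiPertDef}) then yields the desired equivalence of norms, and the Dirichlet condition on $\Gamma_\pert$ is transported to one on $\Gamma$ because $\Phi_\pert(\Gamma) = \Gamma_\pert$. A short computation — which is exactly how~\eqref{eq:var-tra} was derived — shows that $S_\pert$ sends solutions of~\eqref{eq:var-tot} to solutions of~\eqref{eq:var-tra}, with the right-hand side unchanged because $\Phi_\pert$ acts trivially near $\Gamma_H$.

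Second, given $u^i \in H^1_r(\Omega_H) \subset H^1_r(\Omega_H^\pert)$, Theorem~\ref{eq:chand2010} provides a unique solution $u \in \tilde{H}^1_r(\Omega_H^\pert)$ of~\eqref{eq:var-tot}; then $u_\T := S_\pert u \in \tilde{H}^1_r(\Omega_H)$ solves~\eqref{eq:var-tra} by the first step. For uniqueness, any solution $u_\T \in \tilde{H}^1_r(\Omega_H)$ of the homogeneous version of~\eqref{eq:var-tra} pulls back via $S_\pert^{-1}$ to a solution of the homogeneous version of~\eqref{eq:var-tot} in $\tilde{H}^1_r(\Omega_H^\pert)$, which must vanish by Theorem~\ref{eq:chand2010}. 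Applying the equivalence in Theorem~\ref{th:equivalent}, $w_\B = \J_\Omega u_\T \in H^r_\0(\Wast;\tilde{H}^1_\alpha(\Omega_H^\Lambda))$ is then the unique solution of~\eqref{eq:var-traBloch}.

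The main obstacle I anticipate is verifying cleanly that $S_\pert$ preserves the weighted space $\tilde{H}^1_r$ — one has to track that the weight $(1+|x_1|^2)^{r/2}$ interacts properly with the $\Phi_\pert$-pull-back, and that neither the Dirichlet-to-Neumann operator $T^+$ on $\Gamma_H$ nor the right-hand side depending on $u^i$ is affected by the transformation (which is true only because $\Phi_\pert = I$ near $\Gamma_H$). Everything else is a direct consequence of already quoted results; in particular the regularity statement $w_\B \in H^r_\0(\Wast;\tilde{H}^1_\alpha(\Omega_H^\Lambda))$ is built into the equivalence of Theorem~\ref{th:equivalent} via the mapping properties of $\J_\Omega$ recalled in Appendix~\ref{se:bloch}.
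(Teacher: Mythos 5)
Your proposal is correct and follows essentially the same route as the paper: the paper likewise deduces unique solvability of~\eqref{eq:var-tot} from the Chandler-Wilde--Elschner result, notes that this transfers to~\eqref{eq:var-tra} via the diffeomorphism $\Phi_\pert$, and then invokes Theorem~\ref{th:equivalent} to conclude for~\eqref{eq:var-traBloch}. You merely spell out in more detail the (correct) verification that the pull-back $v \mapsto v \circ \Phi_\pert$ is an isomorphism of the weighted spaces, a step the paper compresses into the word ``equivalently.''
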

\begin{proof}
If $\Gamma_\pert$ is graph of a Lipschitz continuous function, then~\cite{Chand2010} implies that both variational formulations~\eqref{eq:var-tot} and, equivalently,~\eqref{eq:var-tra} are uniquely solvable in $\tilde{H}^1_r(\Omega_H)$ for incident fields in $u^i \in H^1_r(\Omega_H)$, $r \in (-1,1)$. 
Theorem~\ref{th:equivalent} now implies that~\eqref{eq:var-traBloch} is uniquely solvable, too.  
\end{proof}

Before we study error estimates for a discretization of the variational formulation of $w_\B$ in the next section, we need to show an auxiliary result on the regularity of this solution. 

\begin{theorem}\label{th:regulH2}
Assume that the restrictions of $\partial u^i/\partial x_2$ and $u^i$ to $\Gamma_H$ belong to $H^{1/2}_r(\Gamma_H)$ and to $H^{3/2}_r(\Gamma_H)$ for $r \in [0,1)$, and that $\zeta$ and $\zeta_\pert  \in {C^{2,1}(\R,\R)}$. Then the quasiperiodic solutions $w_\B(\alpha,\cdot)$ to~\eqref{eq:var-traBloch} do all belong to $H^2_\alpha(\Omega_H^\Lambda)$ and the inverse Bloch transformation $u_\T = \J_\Omega^{-1} w_\B$ belongs to $H^2(\Omega_H)$.
\end{theorem}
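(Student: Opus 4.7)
The plan has two parts: first, apply elliptic regularity to the variational problem~\eqref{eq:var-tra} to deduce $u_\T \in H^2(\Omega_H)$; second, use the fact that $\J_\Omega$ commutes with spatial differentiation to transfer this regularity to the family $w_\B(\alpha,\cdot)$.

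Interpreting~\eqref{eq:var-tra} strongly, $u_\T$ solves
\begin{equation*}
  -\div(A_\pert \nabla u_\T) - k^2 c_\pert u_\T = 0 \text{ in } \Omega_H, \qquad u_\T|_{\Gamma}=0, \qquad \frac{\partial u_\T}{\partial x_2} - T^+(u_\T|_{\Gamma_H}) = g \text{ on } \Gamma_H,
\end{equation*}
with $g := \partial u^i/\partial x_2 - T^+(u^i|_{\Gamma_H})$. Since $\zeta,\zeta_\pert \in C^{2,1}(\R)$, the diffeomorphism $\Phi_\pert$ from~\eqref{eq:phiPertDef} is of class $C^{2,1}$, so $A_\pert \in W^{1,\infty}(\Omega_H,\R^{2\times 2})$ is symmetric, Lipschitz and uniformly positive definite, $c_\pert \in W^{1,\infty}(\Omega_H)$ is bounded below away from zero, and both coefficients reduce to their unperturbed values outside $\Omega_H^\Lambda$. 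By the trace hypotheses on $u^i$ and the mapping property $T^+ : H^{3/2}_r(\Gamma_H) \to H^{1/2}_r(\Gamma_H)$, the datum $g$ lies in $H^{1/2}_r(\Gamma_H)$.

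I would then cover $\overline{\Omega_H}$ by a locally finite partition of unity supported on three types of patches and piece together the following local $H^2$-estimates. In interior patches, the standard $H^2$-regularity theory for divergence-form elliptic equations with Lipschitz coefficients yields $u_\T \in H^2_\loc(\Omega_H)$. In patches near $\Gamma$, straightening the $C^{2,1}$-graph reduces the homogeneous Dirichlet boundary condition to the flat case and classical boundary $H^2$-regularity applies. In patches near the flat surface $\Gamma_H$, tangential differentiation of the interior equation first upgrades the trace $u_\T|_{\Gamma_H}$ to $H^{3/2}_r(\Gamma_H)$, whence $T^+(u_\T|_{\Gamma_H}) + g \in H^{1/2}_r(\Gamma_H)$ controls the normal derivative, and the second normal derivative is recovered algebraically from the PDE using the uniform ellipticity of $A_\pert$. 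The weighted $L^2$ decay encoded by $r \geq 0$ makes the sum of local estimates finite, so $u_\T \in H^2(\Omega_H)$.

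For the transfer, the Bloch transform acts in the spatial variables only by periodic translation and summation (Appendix~\ref{se:bloch}), hence commutes with every partial derivative $\partial_x^\beta$. The $L^2$-isometry of $\J_\Omega$ applied to $\partial_x^\beta u_\T$ for $|\beta|\le 2$ gives $w_\B \in L^2(\Wast;H^2_\alpha(\Omega_H^\Lambda))$, so $w_\B(\alpha,\cdot)\in H^2_\alpha(\Omega_H^\Lambda)$ for almost every $\alpha\in\Wast$, while $u_\T = \J_\Omega^{-1} w_\B \in H^2(\Omega_H)$ has already been obtained. The main obstacle I expect is the third patch type, where the nonlocal nature of $T^+$ forbids a direct appeal to classical Neumann $H^2$-regularity; the trace-bootstrap circumvents this at the cost of one extra step, exploiting that $T^+$ is a first-order pseudodifferential operator which loses exactly one derivative.
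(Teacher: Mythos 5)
Your proposal is correct and takes essentially the same route as the paper: obtain $H^2$-regularity of the spatial solution by localization and elliptic regularity near the $C^{2,1}$ boundary (handling the artificial boundary $\Gamma_H$ separately because $T^+$ is nonlocal), and then transfer to the family $w_\B(\alpha,\cdot)$ through the mapping properties of $\J_\Omega$ between $H^2(\Omega_H)$ and $L^2(\Wast;H^2_\alpha(\Omega_H^\Lambda))$. The only difference is cosmetic: the paper first proves $u\in H^2$ for the original problem on $\Omega_H^\pert$ and then pulls back via the $C^{2,1}$ diffeomorphism $\Phi_\pert$, whereas you apply elliptic regularity directly to the transformed divergence-form problem with the Lipschitz coefficients $A_\pert$, $c_\pert$ on $\Omega_H$.
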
 
\begin{proof}
From~\cite{Chand2005} we know that the variational problem~\eqref{eq:var-tot} possesses a unique solution that is bounded in $H^1(\Omega_H)$ by the norm of $u^i$ in $H^{1/2}(\Gamma_H)$. 
From the regularity of $\zeta$ and $\zeta_\pert$, we deduce that $\Gamma_\pert$ is $C^{2,1}$-smooth such that elliptic regularity results, see, e.g.,~\cite{McLea2000}, imply that both $\partial u^i / \partial x_2$ and the restriction of $u^i$ to $\Gamma_H$ itself belong actually to $H^{3/2}(\Gamma_H)$. 
In turn, these regularity results further imply by localization that $u \in H^2(\Omega_H)$ (see, e.g.,~\cite{Lechl2009}). 
Thus, $u_\T = u \circ \Phi_\pert$ belongs to $H^2(\Omega_H)$ by the $C^{2,1}$-smoothness of $\Phi_\pert$ defined via $\zeta_\pert$ and $\zeta$ in~\eqref{eq:phiPertDef}, and its Bloch transform belongs to $L^2(\Wast; H^2_\alpha(\Omega_H^\Lambda))$ by the mapping properties of $\J_\Omega$:  $w_\B = \J_\Omega u_\T \in L^2(\Wast; H^2_\alpha(\Omega_H^\Lambda))$. 
For almost every $\alpha$, the solution $w_\B$ to~\eqref{eq:var-traBloch} hence belongs to $H^2_\alpha(\Omega_H^\Lambda)$
\end{proof}

We finally state an equivalent way of writing~\eqref{eq:var-traBloch} if the incident field $u^i$ belongs to $H^1_r(\Omega_H)$ for some $r \in (1/2,1)$. 

\begin{theorem}\label{th:testFuncs}
If $\Gamma_\pert$ is graph of a Lipschitz continuous function and if $u^i \in H^1_r(\Omega_H)$ for $r \in (1/2,1)$, then the  solution $w_\B \in L^2(\Wast; \tilde{H}^1_\alpha(\Omega_H^\Lambda))$ equivalently satisfies for all $\alpha\in\Wast$ and all $v_\alpha \in \tilde{H}^1_\alpha(\Omega_H^\Lambda)$ that 
\begin{align}\label{eq:altForm}
  a_{\alpha} (w_\B(\alpha, \cdot), v_\alpha) 
  & + \left[\frac{\Lambda}{2\pi}\right]^{1/2} \int_{\Omega_H^\Lambda} (A_\pert -I)\nabla \big(\J_\Omega^{-1} w_\B\big) \cdot \nabla \overline{v_\alpha} \, dx \\
  & - \left[\frac{\Lambda}{2\pi}\right]^{1/2} k^2 \int_{\Omega_H^\Lambda} (c_\pert -1) \J_\Omega^{-1} w_\B \, \overline{v_\alpha} \, dx 
  = \int_{\Gamma_H^\Lambda} f(\alpha,\cdot) \, \overline{v_\alpha} \, ds. \nonumber
\end{align} 
\end{theorem}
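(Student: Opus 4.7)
The plan is to establish both implications of the equivalence, relying throughout on the fact that the hypothesis $r > 1/2$ forces $\alpha \mapsto w_\B(\alpha,\cdot)$ to be continuous from $\Wast$ into $\tilde{H}^1_\alpha(\Omega_H^\Lambda)$ by Theorem~\ref{th:exiSolScalPerio}. Continuity of $\alpha \mapsto T_\alpha^+$ is visible from its explicit Fourier series, and continuity of $\alpha \mapsto f(\alpha,\cdot) \in H^{-1/2}_\alpha(\Gamma_H^\Lambda)$ follows from the same Sobolev embedding applied to $\J_\Omega u^i$. With these ingredients, the pointwise identity~\eqref{eq:altForm} is well-defined for every $\alpha \in \Wast$.

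For the direction \eqref{eq:altForm}~$\Rightarrow$~\eqref{eq:var-traBloch}, I would take any $v_\B \in L^2(\Wast;\tilde{H}^1_\alpha(\Omega_H^\Lambda))$, test~\eqref{eq:altForm} with $v_\alpha = v_\B(\alpha,\cdot)$, and integrate over $\alpha \in \Wast$. The sesquilinear-form terms and the $\Gamma_H^\Lambda$-integrals match directly. For the two perturbation terms, which are written in physical space without an $\alpha$-integral, I would invoke Parseval's identity for $\J_\Omega$ together with the calculation recalled just before~\eqref{eq:var-traBloch}: because $(A_\pert - I)$ and $(c_\pert - 1)$ are supported in the single period $\Omega_H^\Lambda$, their Bloch transforms reduce (modulo the factor $(\Lambda/2\pi)^{1/2}$) to the original functions themselves. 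Parseval then rewrites the $\alpha$-integrated perturbation integrals exactly as their physical-space counterparts in~\eqref{eq:var-traBloch}.

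The harder direction \eqref{eq:var-traBloch}~$\Rightarrow$~\eqref{eq:altForm} requires localising in the dual variable. Fix $\alpha_0 \in \Wast$ and $v_{\alpha_0} \in \tilde{H}^1_{\alpha_0}(\Omega_H^\Lambda)$, choose a non-negative $\phi_n \in C^\infty_c(\Wast)$ with $\int_\Wast \phi_n\,d\alpha = 1$ and $\mathrm{supp}\,\phi_n$ shrinking to $\{\alpha_0\}$, and set
\[
  v_\B^{(n)}(\alpha,x) := \phi_n(\alpha)\, e^{-\i(\alpha - \alpha_0) x_1}\, v_{\alpha_0}(x).
\]
The multiplier $e^{-\i(\alpha - \alpha_0)x_1}$ turns $\alpha_0$-quasiperiodicity in $x_1$ into $\alpha$-quasiperiodicity, so $v_\B^{(n)} \in L^2(\Wast;\tilde{H}^1_\alpha(\Omega_H^\Lambda))$. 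Inserting $v_\B^{(n)}$ into~\eqref{eq:var-traBloch} and sending $n\to\infty$, the continuity statements of the first paragraph show that the $a_\alpha$-term converges to $a_{\alpha_0}(w_\B(\alpha_0,\cdot), v_{\alpha_0})$ and the right-hand side converges to $\int_{\Gamma_H^\Lambda} f(\alpha_0,\cdot) \overline{v_{\alpha_0}}\,ds$.

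The main obstacle is controlling the perturbation terms in the limit, since they depend on $\J_\Omega^{-1} v_\B^{(n)}$. Unfolding the inverse Bloch transform as an integral over $\Wast$ and pulling $v_{\alpha_0}(x)\,e^{\i\alpha_0 x_1}$ out of the $\alpha$-integral yields, up to the normalization constant of $\J_\Omega^{-1}$,
\[
  (\J_\Omega^{-1} v_\B^{(n)})(x) = v_{\alpha_0}(x)\, e^{\i\alpha_0 x_1}\, \psi_n(x_1), \qquad \psi_n(x_1) := \int_\Wast \phi_n(\alpha)\, e^{-\i\alpha x_1}\,d\alpha,
\]
for $x \in \Omega_H^\Lambda$. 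Since $x_1$ ranges over the bounded set $\W$ and $\phi_n$ is an approximate identity at $\alpha_0$, a direct computation gives $\psi_n \to e^{-\i\alpha_0 x_1}$ and $\psi_n' \to -\i\alpha_0 e^{-\i\alpha_0 x_1}$ uniformly on $\W$; hence $\J_\Omega^{-1} v_\B^{(n)} \to v_{\alpha_0}$ in $\tilde{H}^1(\Omega_H^\Lambda)$. Consequently each perturbation integral in~\eqref{eq:var-traBloch} converges to its $v_{\alpha_0}$-tested counterpart in~\eqref{eq:altForm} at $\alpha=\alpha_0$. Since $(\alpha_0, v_{\alpha_0})$ was arbitrary, this yields~\eqref{eq:altForm} for every $\alpha \in \Wast$.
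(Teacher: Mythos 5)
Your proposal is correct and follows essentially the same route as the paper's proof: both first deduce from $r>1/2$ that $\alpha\mapsto w_\B(\alpha,\cdot)$ is continuous via the one-dimensional Sobolev/Morrey embedding of $H^r_\0(\Wast;\cdot)$ (though for the perturbed solution this rests on Theorem~\ref{th:RegQuasi} together with the observation that $u_\T$ inherits the decay of $u$, rather than on Theorem~\ref{th:exiSolScalPerio}, which concerns the unperturbed problem), then obtain~\eqref{eq:altForm} by testing~\eqref{eq:var-traBloch} with Dirac distributions in $\alpha$ --- which your approximate-identity family $v_\B^{(n)}$ simply implements rigorously --- and recover~\eqref{eq:var-traBloch} from~\eqref{eq:altForm} by integrating against arbitrary test functions, which the paper phrases via a countable dense family of $L^2(\Wast;\tilde{H}^1_\alpha(\Omega_H^\Lambda))$. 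The only caveat is the bookkeeping of the factor $[\Lambda/2\pi]^{1/2}$ that $\J_\Omega^{-1}$ carries on a single period, which you rightly leave to the paper's normalization conventions.
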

%
%
\begin{proof}
Reference~\cite{Chand2010} states that the solution $u$ to~\eqref{eq:var-tot} belongs to $H^1_r(\Omega_H)$ if the incident field decays as indicated for $r \in [0,1)$ (even for $r\in (-1,1)$). 
As $\Phi_\pert$ merely modifies $u$ in a bounded region, the transformed field $u_\T = u \circ \Phi_\pert$ decays with the same rate as $x_1 \to \pm \infty$ and $w_\B = \J_\Omega u_\T$ hence belongs to $H^r_\0(\Wast; \tilde{H}^1_\alpha(\Omega_H^\Lambda)) \subset H^r_\0(\Wast; \tilde{H}^1(\Omega_H^\Lambda))$ for $r >1/2$.
Such functions are continuous in $\alpha$ with values in $\tilde{H}^1(\Omega_H^\Lambda)$ due to Sobolev embeddings (or Morrey's estimate) in one dimension (see~\cite{Evans1998, Lechl2016a}). 

Thus, we can test equation~\eqref{eq:var-traBloch} by Dirac distributions in $\alpha_0 \in \Wast$, multiplied by functions $v_{\alpha_0} \in \tilde{H}^1_\alpha(\Omega_H^\Lambda)$, to get that $w_\B$ solves~\eqref{eq:altForm} for each $\alpha_0 \in\Wast$. 
In turn, if $w_\B$ satisfies the latter (infinite number of) equations, constructing a complete countable family of test functions in $L^2(\Wast; \tilde{H}^1_\alpha(\Omega_H^\Lambda))$ shows that $w_\B$ solves~\eqref{eq:var-traBloch} as well. (To this end, one takes an orthonormal family of the separable Hilbert space $L^2(\Wast; \tilde{H}^1_\alpha(\Omega_H^\Lambda))$; separability can be shown by first considering functions that are piecewise constant in $\alpha$ and take values in the periodic Sobolev functions $\tilde{H}^1_0(\Omega_H)$, and, second, multiplication of these functions by $\exp(- \i \alpha x_1)$.) 
\end{proof}

\section{The Numerical Scheme and Error Estimates}\label{se:invBloch}
In this section, we discuss a Galerkin discretization of the variational formulation~\eqref{eq:var-traBloch} of $w_\B$ together with an error estimate for the solution to the discretized problem. 
Of course, this makes it necessary to introduce a suitable finite element space first. 
We actually chose the simplest type of (nodal) elements, which is not crucial but avoids technicalities. (For instance,  when using periodic boundary integral equations instead, we would need to take care of exceptional wave numbers where uniqueness of solution fails.)

We assume hence to know a family of regular and quasi-uniform meshes $\mathcal{M}_h$, $0<h\leq h_0$, of the domain $\Omega_H^\Lambda$ such that for each mesh width $h$ the nodes on the right and left boundary of $\Omega_H^\Lambda$ have the same height. This in particular ensures that piecewise linear and globally continuous functions on that mesh can be extended to periodic functions on a regular and quasi-uniform mesh of $\Omega_H$. To construct such periodic functions we omit now all nodal points on the left boundary of $\Omega_H^\Lambda$, denote the piecewise linear and globally continuous nodal functions equal to one at exactly one of the remaining nodes and zero at all others by $\{ \phi_{M}^{(\ell)} \}_{\ell=1}^{M}$, and denote the discrete subspace spanned by these functions by $\tilde{V}_h \subset \tilde{H}^1_0(\Omega_H^\Lambda)$. 
It is well-known (see, e.g.,~\cite{Saute2007}) that for functions $v \in \tilde{H}^1_0(\Omega_H^\Lambda) \cap H^2(\Omega_H^\Lambda)$ there holds 
\begin{equation}\label{eq:assSpac}
  \min_{v_h \in \tilde{V}_h} \| v_{\alpha,h} - v \|_{H^\ell(\Omega_H^\Lambda)} \leq C h^{2-\ell} \| v \|_{H^2(\Omega_H^\Lambda)}
  \qquad \text{for $0<h<h_0$.} 
\end{equation}
To introduce our finite element space, we introduce uniformly distributed grid points 
\[
  \alpha_N^{(1)} =-\frac{\pi}{\Lambda}+\frac{\pi}{N\Lambda}, \qquad 
  \alpha_N^{(j)} = \alpha_N^{(j-1)}+\frac{2\pi}{N\Lambda}  \in \Wast \quad
  \text{for }j=2,\dots,N \in \N,
\] 
and consider a basis $\{ \psi_{N}^{(j)} \}_{j=1}^N$ of the space of functions that are piecewise constant on each interval $[\alpha_N^{(j)}-\pi/(N\Lambda), \alpha_N^{(j)}+\pi/(N\Lambda)]$ for $j=1,\dots,N$ such that $\psi_N^{(j)}$ equals one on the $j$th interval and zero else. 
The finite element space $\tilde{X}_{N,h}$ we consider is spanned by products of these two bases, multiplied by $\exp(-\i \alpha x_1)$, 
\begin{equation}\label{eq:XN}
  \tilde{X}_{N,h} = \bigg\{ v_{N,h}(\alpha,x) = e^{-\i \alpha x_1} \sum_{j=1}^N \sum_{\ell=1}^M \hspace*{-0.7mm} v_{N,h}^{(j,\ell)} \psi_{N}^{(j)}(\alpha) \phi_{M}^{(\ell)}(x)\hspace*{-0.7mm} : v_{N,h}^{(j,\ell)} \in \C \bigg\}
  \subset L^2(\Wast; \tilde{H}^1_\alpha(\Omega_H^\Lambda)),
\end{equation}
It is easy to see that functions in $\tilde{X}_{N,h}$ indeed belong to $L^2(\Wast; \tilde{H}^1_\alpha(\Omega_H^\Lambda))$: 
Without the exponential factor in~\eqref{eq:XN} they are clearly periodic in $\alpha$ and in $x_1$ as functions defined in $\R \times \Omega_H$; further, multiplication by $\exp(-\i \alpha x_1)$ implies for $\alpha \in \R$ and $x \in \Omega_H$ that 
\[
  v_{N,h}\left(\alpha, \left( \begin{smallmatrix}x_1+\Lambda\\ x_2 \end{smallmatrix} \right) \right) 
  = e^{-\i \alpha (x_1+\Lambda)} \sum_{j=1}^N \sum_{\ell=1}^M v_{N,h}^{(j,\ell)} \psi_{N}^{(j)}(\alpha) \phi_{M}^{(\ell)}(\left( \begin{smallmatrix}x_1+\Lambda\\ x_2 \end{smallmatrix} \right))
  = e^{-\i \alpha \Lambda} v_{N,h}\left(\alpha, x \right),
\]
such that $v_{N,h}(\alpha,\cdot)$ is $\alpha$-quasiperiodic and hence belongs to $\tilde{H}^1_\alpha(\Omega_H^\Lambda)$. 

Introduce now, abstractly, the sesqui-linear form
\[
  b(w,v) =\left[\frac{\Lambda}{2\pi}\right]^{1/2}\int_{\Omega_H^\Lambda}\left[(A_\pert -I)\nabla { w}\cdot\overline{\nabla v}-k^2(c_\pert -1) \, w \overline{v}\right] dx
  \quad \text{on } H^1(\Omega_H^\Lambda) \times H^1(\Omega_H^\Lambda). 
\]
For the boundary term $f(\alpha, \cdot) = \partial (\J_\Omega u^i(\alpha,\cdot))/\partial x_2 - T_\alpha^+(\J_\Omega u^i)(\alpha,\cdot)$ in $H^{-1/2}_\alpha(\Gamma_H)$ from~\eqref{eq:heAlphaf}, we now seek a finite element solution $w_{N,h} \in \tilde{X}_{N,h}$ to the finite-dimensional problem  
\begin{equation}\label{eq:var-traBloch3}
\int_\Wast a_{\alpha}(w_{N,h},v_{N,h}) \, d\alpha 
+ b \big( \J_{\Omega}^{-1} w_{N,h}, \overline{\J_{\Omega}^{-1} v_{N,h}} \big) 
 = \int_\Wast \int_{\Gamma_H^\Lambda} f(\alpha, \cdot) \, \overline{v}_{N,h} \, ds \, d\alpha 
\end{equation}
for all $v_{N,h} \in \tilde{X}_{N,h}$.
As functions in $\tilde{X}_{N,h}$ are for fixed $x$ piecewise exponential in $\alpha$ on each interval $[\alpha_N^{(j-1)}, \alpha_N^{(j)}]$, the inverse Bloch transform in the latter problem can be explicitly computed:  
\begin{align}
  \J_\Omega^{-1} w_{N,h} (\alpha,x)
  & = \left[ \frac{\Lambda}{2\pi} \right]^{1/2} \sum_{j=1}^N \int_{\alpha_N^{(j)}-\pi/(N\Lambda)}^{\alpha_N^{(j)}+\pi/(N\Lambda)} w_{N,h}(\alpha, x) \d{\alpha} \nonumber \\
  & = \left[ \frac{\Lambda}{2\pi} \right]^{1/2} \sum_{j=1}^N \sum_{\ell=1}^M v_{N,h}^{(j,\ell)} \phi_{M}^{(\ell)}(x) \int_{\alpha_N^{(j)}-\pi/(N\Lambda)}^{\alpha_N^{(j)}+\pi/(N\Lambda)} e^{-\i \alpha x_1} \d{\alpha} \nonumber \\
  & = \left[ \frac{\Lambda}{2\pi} \right]^{1/2} \sum_{j=1}^N g_N^{(j)} (x_1) \sum_{\ell=1}^M v_{N,h}^{(j,\ell)} \phi_{M}^{(\ell)}(x) 
  =: \J_{\Omega,N}^{-1} \big( \{ w_{N,h} (\alpha_N^{(j)},\cdot)\}_{j=1}^N \big) \label{eq:aux462}
\end{align}
where 
\begin{equation} \label{eq:aux481}
  g_N^{(j)}(x_1) 
  =  \i e^{-\i \alpha_N^{(j)} x_1} \left[e^{-\i \pi x_1 /{(N\Lambda})} - e^{\i \pi x_1 /{(N\Lambda})} \right] /  x_1 \quad \text{ if $x_1 \not = 0$,} 
\end{equation}
and $g_N^{(j)}(0) = 2\pi/(N\Lambda)$. 
Note that~\eqref{eq:aux462} hence defines a numerical approximation $\J_{\Omega,N}^{-1}$ to the inverse Bloch transform that we rely on in our numerical examples later on.
%
%

\begin{theorem}\label{th:numInvBloch}
Assume that $u^i \in H^2_r(\Omega_H)$ for $r \geq 1/2$ and that $\zeta$ and $\zeta_\pert$ are $C^{2,1}$ diffeomorphisms. 
Then the linear system \eqref{eq:var-traBloch3} is uniquely solvable in $\tilde{X}_{N,h}$ for any right-hand side 
\[
  f(\alpha, \cdot) = \frac{\partial \J_\Omega u^i}{\partial x_2}(\alpha,\cdot) - T_\alpha^+(\J_\Omega u^i)(\alpha,\cdot)
  \quad \text{in } H^r_\0(\Wast; H^{1/2}_\alpha(\Gamma_H^\Lambda))
\] 
if $N \geq N_0$ is large enough and $0<h<h_0$ is small enough.
The solution $w_\B \in \tilde{X}_{N,h}$ satisfies the error estimate 
\begin{equation}\label{eq:errorU}
\big\| w_{N,h} - w_\B \big\|_{L^2(\Wast; H^\ell(\Omega_H^\Lambda))} 
\leq C h^{1-\ell} \left( N^{-r} + h \right) \| f \|_{H^r_\0(\Wast; H^{1/2}_\alpha(\Gamma_H^\Lambda))}, 
\qquad \ell=0,1.
\end{equation}
\end{theorem}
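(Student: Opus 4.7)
The plan is to treat~\eqref{eq:var-traBloch3} as a Galerkin approximation of the continuous equation~\eqref{eq:var-traBloch} and carry out the standard three-step program: first a G{\aa}rding inequality combined with a Schatz argument to obtain a uniform discrete inf-sup condition, then an estimate of the best-approximation error in $\tilde{X}_{N,h}$ by separating the approximations in $\alpha$ and in $x$, and finally an Aubin--Nitsche duality to pass from the $H^1$- to the $L^2$-estimate.

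\textbf{Step 1 (discrete inf-sup).} I would write the sesqui-linear form behind~\eqref{eq:var-traBloch} abstractly as
\[
  B(w,v) := \int_{\Wast} a_\alpha\bigl(w(\alpha,\cdot),\, v(\alpha,\cdot)\bigr) \d{\alpha}
  + b\bigl(\J_\Omega^{-1} w,\, \overline{\J_\Omega^{-1} v}\bigr)
\]
on $L^2(\Wast; \tilde{H}^1_\alpha(\Omega_H^\Lambda))$, isolate its $\nabla w \cdot \nabla \overline{v}$ principal part (coercive after integration in $\alpha$), and show that everything else is a compact perturbation: the $k^2$-mass terms by the compact embedding $\tilde{H}^1_\alpha \hookrightarrow L^2$; the $T_\alpha^+$ contribution by its smoothing $H^{1/2}_\alpha \to H^{-1/2}_\alpha$, uniform in $\alpha$ via Lemma~\ref{th:uniqueAlpha}; and $b \circ \J_\Omega^{-1}$ because $A_\pert - I$ and $c_\pert - 1$ have compact support in $\Omega_H^\Lambda$ and $\J_\Omega^{-1}$ is bounded on the relevant spaces. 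Combined with continuous uniqueness from Theorem~\ref{th:RegQuasi}, this yields a continuous inf-sup bound for $B$. A Schatz-type argument then transfers this to a uniform discrete inf-sup on $\tilde{X}_{N,h}$ once $N \geq N_0$ is large and $h \leq h_0$ is small enough for the best-approximation operator on $\tilde{X}_{N,h}$ to resolve the compact part well enough; existence, uniqueness of $w_{N,h}$ and C\'ea's quasi-optimality
\[
  \|w_\B - w_{N,h}\|_{L^2(\Wast; H^1(\Omega_H^\Lambda))} \leq C \inf_{v_{N,h} \in \tilde{X}_{N,h}} \|w_\B - v_{N,h}\|_{L^2(\Wast; H^1(\Omega_H^\Lambda))}
\]
follow.

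\textbf{Step 2 (best approximation).} I would exploit the tensor-product structure of $\tilde{X}_{N,h}$. Theorem~\ref{th:regulH2} yields $w_\B \in L^2(\Wast; H^2_\alpha(\Omega_H^\Lambda))$ while the (analogue for the perturbed problem of) Theorem~\ref{th:exiSolScalPerio} gives $w_\B \in H^r_\0(\Wast; \tilde{H}^1_\alpha(\Omega_H^\Lambda))$, and the continuous stability of~\eqref{eq:var-traBloch} together with the elliptic regularity estimate from Theorem~\ref{th:regulH2} bounds both norms by the $H^r_\0(\Wast; H^{1/2}_\alpha(\Gamma_H^\Lambda))$-norm of $f$. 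After factoring out $e^{-\i\alpha x_1}$, elements of $\tilde{X}_{N,h}$ are piecewise constant in $\alpha$ on the grid of mesh size $2\pi/(N\Lambda)$ and piecewise linear in $x$; so the standard nodal estimate~\eqref{eq:assSpac} in $x$ together with a Jackson/Bramble--Hilbert estimate in $\alpha$ (using the $H^r_\0$-regularity, $0\leq r <1$) yields
\[
  \inf_{v_{N,h} \in \tilde{X}_{N,h}} \|w_\B - v_{N,h}\|_{L^2(\Wast; H^1(\Omega_H^\Lambda))} \leq C\bigl(N^{-r} + h\bigr) \|f\|_{H^r_\0(\Wast; H^{1/2}_\alpha(\Gamma_H^\Lambda))}.
\]
Together with Step~1 this settles the case $\ell = 1$ of~\eqref{eq:errorU}.

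\textbf{Step 3 (duality).} For $\ell = 0$ I would run Aubin--Nitsche: given $g \in L^2(\Wast; L^2(\Omega_H^\Lambda))$, let $z$ solve the adjoint problem $B(\varphi,z) = \langle \varphi, g\rangle$ for all $\varphi \in L^2(\Wast; \tilde{H}^1_\alpha)$; since $B^\ast$ has the same structural form as $B$, the very same regularity chain as in Theorem~\ref{th:regulH2} produces $z \in L^2(\Wast; H^2_\alpha)$ with $\|z\|_{L^2(\Wast; H^2_\alpha)} \leq C \|g\|_{L^2(\Wast; L^2)}$. Plugging $g = w_\B - w_{N,h}$ into this identity, Galerkin orthogonality, and the piecewise-linear nodal interpolant of $z$ in $\tilde{X}_{N,h}$ contribute the extra factor $h$ and yield the claimed $h^{1-\ell}(N^{-r} + h)$ bound for $\ell=0$.

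The main obstacle I foresee is the rigorous verification of the Schatz step on $\tilde{X}_{N,h}$: the form $B$ is non-diagonal in $\alpha$ and non-local in $x$ through the $\J_\Omega^{-1}$ factor appearing inside $b$, so the usual tensor-product separation of variables must be combined with the mapping properties of $\J_\Omega$ from the appendix and the uniform-in-$\alpha$ bounds for $A_\alpha$ from Lemma~\ref{th:uniqueAlpha}. A secondary technical point is checking that piecewise-constant (in $\alpha$) approximation truly attains the rate $N^{-r}$ in the $H^r_\0$-fractional norm when $0\leq r<1$; once both are in hand, the remaining calculations are routine.
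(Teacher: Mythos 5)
Your proposal follows essentially the same route as the paper's proof: G\aa rding inequality plus injectivity for discrete solvability and quasi-optimality, the $H^2$-in-$x$ and $H^r$-in-$\alpha$ regularity from Theorems~\ref{th:regulH2} and~\ref{th:testFuncs} for the $(N^{-r}+h)$ best-approximation bound, and an Aubin--Nitsche duality with the $H^2$-regular adjoint solution for the extra factor of $h$ in the $L^2$-estimate. The two technical points you flag at the end are also left implicit in the paper (which simply cites standard finite element theory for the Schatz step), so your outline matches both the structure and the level of detail of the original argument.
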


\begin{remark}
  (a) Despite we have explicitly introduced the finite dimensional space $\tilde{X}_{N,h}$ via piecewise linear and globally continuous functions on a mesh of $\Omega_H^\Lambda$, Theorem~\ref{eq:errorU} holds for any family of finite-dimensional spaces that satisfies~\eqref{eq:assSpac}.
  Of course,~\eqref{eq:var-traBloch3} is also uniquely solvable for any other continuous linear form on $H^r_\0(\Wast; \tilde{H}^1_\alpha(\Omega_H^\Lambda))$ as right-hand side.\\[1mm] 
  (b) The assumption of Theorem~\ref{th:numInvBloch} for $u^i$ applies for instance if $u^i$ is the Dirichlet Green's function of the half space for all $r\in(1/2,1)$, see~\cite{Lechl2016a}.  
\end{remark}

\begin{proof}
The proof exploits the regularity result in Theorem~\ref{th:regulH2} stating that $w_\B(\alpha; \cdot) \in H^2_\alpha(\Omega_H^\Lambda)$. The latter function is continuous in $\alpha$ by Theorem~\ref{th:testFuncs}. 
Solvability of the given discretized sesqui-linear problem that features a continuous sesqui-linear form that satisfies a G\r{a}rding's inequality as well as an injectivity condition is due to basic finite elements theory, see, e.g.,~\cite{Saute2007}. 
The indicated error bound~\eqref{eq:errorU} follows from the corresponding standard convergence estimate
\begin{align}
  \big\| w_{N,h} - w_\B\big\|_{L^2(\Wast;\tilde{H}^1_\alpha(\Omega_H^\Lambda))}
  & \leq C \inf_{v_{N,h}\in \tilde{X}_{N,h}} \big\| v_{N,h} - w_\B\big\|_{L^2(\Wast;\tilde{H}^1_\alpha(\Omega_H^\Lambda))} \nonumber \\
  & \leq C (N^{-r} + h) \| w_\B \|_{H^r_\0(\Wast;H^2(\Omega_H^\Lambda))} \label{eq:aux494} \\
  & \leq C (N^{-r} + h) \| f \|_{H^r_\0(\Wast;H^{1/2}_\alpha(\Gamma_H^\Lambda))}. \nonumber 
\end{align}

To prove the additional $L^2(\Omega_H^\Lambda)$-estimate, we need to consider the adjoint problem to find $v_\B \in  L^2(\Wast; \tilde{H}^1_\alpha(\Omega_H^\Lambda))$ such that, for some given $g \in L^2(\Wast; L^2(\Omega_H^\Lambda))$, there holds   
\begin{equation}\label{eq:adjoint}
\begin{split} 
\int_\Wast a_{\alpha}(w,v_\B) \, d\alpha 
+ b \big( \J_\Omega^{-1} w, & \J_\Omega^{-1} v_\B \big) 
 = \int_\Wast \int_{\Omega_H^\Lambda} w \, \overline{g} \, ds \, d\alpha 
\end{split}
\end{equation}
for all $w \in L^2(\Wast; \tilde{H}^1_\alpha(\Omega_H^\Lambda))$. 
Conjugating the entire latter equation obviously yields a Fredholm problem such that is suffices to show uniqueness of solution to deduce existence of solution. 
{If $v \in L^2(\Wast; \tilde{H}^1_\alpha(\Omega_H^\Lambda))$ annihilates the latter sesqui-linear form, 
\begin{align*}
  \int_\Wast a_{\alpha}(v,v) \, d\alpha 
  + b \big( \J_{\Omega}^{-1} v, \, \overline{\J_{\Omega}^{-1}v} \big) =0,
\end{align*}
we  deduce that $v$ solves as well the homogeneous primal problem~\eqref{eq:var-traBloch} and hence vanishes by uniqueness of the primal problem (see Theorem~\ref{th:RegQuasi}). }

The arguments proving the $H^2$-regularity estimate for the solution $w_\B$ from Theorem~\ref{th:regulH2} directly transfer to the solution $v_\B$ to the adjoint problem~\eqref{eq:adjoint}, such that there is $C>0$ with $\| v_\B \|_{L^2(\Wast; H^2(\Omega_H^\Lambda)))} \leq C \| g \|_{L^2(\Wast; L^2(\Omega_H^\Lambda))}$. 
Recall that the difference $w_{N,h}-w_\B$ of the solutions to the continuous and discretized primal problem satisfy Galerkin orthogonality, 
\[
  \mathcal{A}(w_{N,h}-w_\B, v_{N,h})
  := \int_\Wast a_\alpha(w_{N,h}-w_\B, v_{N,h}) \, d\alpha 
  \, + \, b( \J_\Omega^{-1}(w_{N,h}-w_\B), \, \J_\Omega^{-1} v_{N,h})
  =0
\]
for all elements $v_{N,h} \in \tilde{X}_{N,h}$ of the discretization space. 
This shows that 
\begin{align}
 (w_{N,h}-w_\B, g)_{L^2(\Wast \times \Omega_H^\Lambda)}
 & = \mathcal{A}(w_{N,h}-w_\B, v_\B)
 = \mathcal{A}(w_{N,h}-w_\B, v_\B- v_{N,h}) \label{eq:aux543} \\
 & \leq C \| w_{N,h}-w_\B \|_{L^2(\Wast; \tilde{H}^1_\alpha(\Omega_H^\Lambda))} \, \| v_\B- v_{N,h} \|_{L^2(\Wast; \tilde{H}^1_\alpha(\Omega_H^\Lambda))} \nonumber 
\end{align}
for all $v_{N,h} \in \tilde{X}_{N,h}$. 
If we choose $v_{N,h}$ as the orthogonal projection of $v_\B$ onto $\tilde{X}_{N,h} \subset L^2(\Wast; \tilde{H}^1_\alpha(\Omega_H^\Lambda))$, then firstly 
\begin{equation}\label{eq:aux548}
  \| v_\B- v_{N,h} \|_{L^2(\Wast; \tilde{H}^1_\alpha(\Omega_H^\Lambda))}
  \leq C h \| v_\B \|_{L^2(\Wast; H^2(\Omega_H^\Lambda))} 
  \leq C h \| g \|_{L^2(\Wast; L^2(\Omega_H^\Lambda))}.  
\end{equation}
Together with~\eqref{eq:aux543}, this estimate secondly implies that  
\[
  (w_{N,h}-w_\B,g)_{L^2(\Wast \times \Omega_H^\Lambda)} 
  \leq Ch \| w_{N,h}-w_\B \|_{L^2(\Wast; \tilde{H}^1_\alpha(\Omega_H^\Lambda))}
\]
holds for all $g \in L^2(\Wast; L^2(\Omega_H^\Lambda))$ with norm equal to one. 
In consequence, Theorem~\ref{th:regulH2} and~\eqref{eq:aux494} imply that 
\begin{align*}
  \| w_{N,h}-w_\B \|_{L^2(\Wast\times\Omega_H^\Lambda)} 
  & \leq Ch \| w_{N,h}-w_\B \|_{L^2(\Wast; \tilde{H}^1_\alpha(\Omega_H^\Lambda))} \\
  & \leq C h(N^{-r}+h) \| f \|_{H^r_\0(\Wast; H^{1/2}_\alpha(\Gamma_H^\Lambda))} .
\end{align*}
\end{proof}

\section{Numerical Implementation for Locally Perturbed Surfaces}\label{se:NumMet}
In this section, we describe the numerical implementation of the variational problem~\eqref{eq:var-traBloch3} in detail. For convenience, we solve for the scattered field instead of for the total field and further periodize all quasiperiodic functions, such that the sesqui-linear forms will become $\alpha$-dependent instead of the function spaces. 

Recall that the scattered field $w^s(\alpha,x):=w_\B(\alpha,x)-(\J_\Omega u^i)(\alpha,x)$ belongs to $L^2(\Wast;\, H^1_\alpha(\Omega^\Lambda_H))$ and satisfies the variational problem 
\begin{equation}\label{eq:var-sca}
\begin{split}
\int_\Wast a_{\alpha} (w^s(\alpha,\cdot),v_\B(\alpha,\cdot)) \, d\alpha 
 & + \left[\frac{\Lambda}{2\pi}\right]^{1/2} \int_{\Omega_H^\Lambda} (A_\pert -I)\nabla \big(\J_\Omega^{-1} w^s \big) \cdot \nabla \big(\overline{\J_\Omega^{-1} v_\B} \big)\, dx \\
& - k^2 \left[\frac{\Lambda}{2\pi}\right]^{1/2}\int_{\Omega_H^\Lambda} (c_\pert -1) \J_\Omega^{-1} w^s \, \overline{\J_\Omega^{-1} v_\B} \, dx 
  = 0
\end{split}
\end{equation}
for all $v_\B \in L^2(\Wast;\, \tilde{H}^1_\alpha(\Omega^\Lambda_H))$, together with the variationally formulated boundary conditions
\begin{equation}\label{eq:var-bc}
\int_\Wast \int_\Gamma \left[ w^s(\alpha,\cdot) - (\J_\Omega u^i)(\alpha,\cdot) \right] \overline{t}(\alpha,\cdot) \, dx \, d\alpha = 0 \quad \text{ for all } t \in L^2(\Wast;\, H^{-1/2}_\alpha(\Gamma)).
\end{equation}
We next periodize all functions involved in the latter formulation, that is, we introduce 
\begin{equation*}
  w_\Null(\alpha,x)=e^{\i\alpha x_1}  w^s(\alpha,x)
  \quad \text{ and } \quad 
  v_\Null(\alpha,x)=e^{\i\alpha x_1} v_\B(\alpha,x) 
  \quad 
  \text{for $(\alpha,x) \in \Wast\times\Omega_H$,}
\end{equation*}
such that $w_\Null$ and $v_\Null$ in $L^2(\Wast; H^1_0(\Omega^\Lambda_H))$ are for fixed $\alpha$ two $\Lambda$-periodic functions in $x_1$.  
Further, $t_\Null(\alpha,x)=\exp(\i\alpha x_1)t(\alpha,x)$ belongs to $L^2(\Wast; H^{-1/2}_0(\Gamma))$.  

As the gradient $\nabla_x w^s(\alpha,\cdot)$ transforms to {$(\nabla_x + \i\alpha{\bm e_1})(e^{-\i\alpha x_1}w_\Null)$}, the variational problem \eqref{eq:var-sca} for $w^s$ equivalently reformulates for $w_\Null$ as 
\begin{equation}
\label{eq:var-traBloch1}
\begin{aligned}
& \int_\Wast \left[ \int_{\Omega_H^\Lambda} \left[ (\nabla_x + \i\alpha{\bm e_1})w_\Null \cdot (\nabla_x - \i\alpha{\bm e_1})\overline{v_\Null} - k^2 w_\Null\overline{v_\Null}\right] dx - \int_{\Gamma^\Lambda_H}\tilde{T}_\alpha^+(w_\Null|_{\Gamma^\Lambda_H})\overline{v_\Null} \, ds \right] d\alpha \\
& \qquad + \left[\frac{\Lambda}{2\pi}\right]^{1/2} \int_{\Omega_H^\Lambda} (A_\pert -I)\nabla \J_\Omega^{-1} (\exp(-\i\alpha \, \cdot) w_\Null) \cdot \overline{\J_\Omega^{-1} \nabla(\exp(-\i\alpha \, \cdot) v_\Null)} \, dx \\
& \qquad \qquad - k^2 \left[\frac{\Lambda}{2\pi}\right]^{1/2} \int_{\Omega_H^\Lambda} (c_\pert -1) \J_\Omega^{-1} (\exp(-\i\alpha \, \cdot) w_\Null) \, \overline{\J_\Omega^{-1} (-\exp(\i\alpha \, \cdot) v_\Null)} \,dx 
=0
\end{aligned}
\end{equation}
for all $v_\Null \in L^2(\Wast;\tilde{H}^1_0(\Omega_H^\Lambda))$ and 
\begin{equation}\label{eq:aux811}
\int_\Wast \int_{\Gamma^\Lambda}\left[ w_\Null(\alpha,\cdot) - e^{\i\alpha x_1}(\J_\Omega u^i)(\alpha,\cdot) \right] \overline{t_\Null}(\alpha,x)\, ds \, d\alpha = 0
\end{equation}
for all $t\in L^2(\Wast; \, H^{-1/2}_0(\Gamma^\Lambda))$. 
In~\eqref{eq:var-traBloch1}, the modified Dirichlet-to-Neumann map $\tilde{T}^+_\alpha$ is defined on periodic functions $\varphi = \sum_{j\in\Z} \hat{\varphi}(j) \exp(\i j\Lambda  x_1)$ in $H^s(\Gamma_H^\Lambda)$ by 
\begin{equation}\label{eq:perioDtNmodify}
   \tilde{T}_\alpha^+ \left( \varphi \right)\Big|_{\Gamma_H^\Lambda}
   = \bigg[\i \sum_{j\in\Z} \sqrt{k^2 - |\Lambda^\ast j - \alpha|^2} \, \hat{\varphi}(j) \, e^{\i j\Lambda   x_1} \bigg]\bigg|_{\Gamma_H^\Lambda}.
\end{equation}
For simplicity, we introduce short-hand notation for~\eqref{eq:perioDtNmodify}, writing  
\begin{align} \label{eq:aux699}
  b'(w,v) = 
  & \left[\frac{\Lambda}{2\pi}\right]^{1/2} \int_{\Omega_H^\Lambda} \left[(A_\pert -I)\nabla \J_\Omega^{-1} ( \exp(-\i\alpha \, \cdot)w) \cdot \nabla \overline{\J_\Omega^{-1}(\exp(-\i\alpha \, \cdot) v)} \right. \\
  & \qquad\qquad\qquad - \left. k^2(c_\pert -1) \J_\Omega^{-1} ( \exp(-\i\alpha \, \cdot)w) \, \overline{\J_\Omega^{-1}(\exp(-\i\alpha \, \cdot) v)} \right] dx \nonumber
\end{align}
for arbitrary $w,v \in H^1(\Omega_H^\Lambda)$ and abbreviate the term inside the $\alpha$-integral in the first line in~\eqref{eq:var-traBloch1} by $a_{\alpha}'(w_\Null,v_\Null)$. 
Then~\eqref{eq:var-traBloch1} reads 
\begin{equation}\label{eq:aux672}
\int_\Wast a_{\alpha}'(w_\Null(\alpha,\cdot), v_\Null(\alpha,\cdot)) \, d\alpha + b'(w_\Null,v_\Null) = 0
\quad \text{for all } v_\Null \in \tilde{X}_{N,h}. 
\end{equation}
Let us emphasize that $b'$ implements the coupling due to the perturbation of the periodic surface between the different quasiperiodic components of the Bloch transformed solution. 

We next discretize the latter family of problems by finite elements and recall from the definition of the finite-dimensional approximation space $\tilde{X}_{N,h}$ in \eqref{eq:XN} the piecewise constant set of functions $\{\psi_{N}^{(j)} \}_{j=1}^N$ in $\alpha$, as well as the piecewise linear and globally continuous nodal basis $\{\phi_{M}^{(l)}\}_{l=1}^{M}$ of the finite-dimensional approximation space $\tilde{V}_h \subset \tilde{H}^1_0(\Omega_H^\Lambda)$ of basis functions that vanish on $\Gamma^\Lambda$. 
We now introduce a larger approximation space $V_h = \mathrm{span}\{ \phi_{M'}^{(l)} \}_{l=1}^{M'}$ with $M' =M'(h) \geq M$ spanned by \textit{all} basis functions defined on the mesh, i.e., also those that do not vanish on $\Gamma^\Lambda$, such that $\phi_{M'}^{(l)} = \phi_{M}^{(l)}$ for $1 \leq l \leq M$ and 
\[
  V_h := \mathrm{span}\{\phi_{M'}^{(l)}\}_{l=1}^{M'} \subset H^1_0(\Omega_H^\Lambda).
\]
If we denote the nodes of the mesh defining $V_h$ by $x_{M'}^{(1)}, \dots, x_{M'}^{(M')}$, then $\phi_{M'}^{(m)}$ is piecewise linear on each triangle of the mesh and satisfies $\phi_{M'}^{(\ell)} (x_{M'}^{(m)}) = \delta_{\ell,m}$ for $1\leq \ell,m \leq M'$.  
In particular, the mesh functions $\phi_{M'}^{(M+1)}, \dots, \phi_{M'}^{(M')}$ are linked to the nodes $x_{M'}^{(M+1)}, \dots, x_{M'}^{(M')}$ that are contained by  $\Gamma^\Lambda$; these basis functions hence yield the boundary values of a function in $V_h$. 
This finite element space $V_h$ then defines 
\begin{equation}
  X_{N,h} = \left\{v_{N,h}(\alpha,x) = e^{-\i\alpha x_1} \sum_{j=1}^N\sum_{l=1}^{M'} v_{N,h}^{(j,l)} \psi_{N}^{(j)}(\alpha) \phi_{M'}^{(l)}(x): \,v_{N,h}^{(j,l)}\in\C\right\} \subset L^2(\Wast; H^1_\alpha(\Omega_H^\Lambda))
\end{equation}
as well as subspaces $Y_{N,h}^{(j)}$ of functions that are constant in $\alpha \in (\alpha_N^{(j)}-\pi/(N\Lambda), \alpha_N^{(j)}+\pi/(N\Lambda)]$, 
\begin{equation}
  Y_{N,h}^{(j)} = \left\{v_{N,h}^{(j)}(\alpha,x) = \sum_{l=1}^{M'} v_{N,h}^{(j,l)}\psi_{N}^{(j)}(\alpha) \phi_{M'}^{(l)}(x): \,v_{N,h}^{(j,l)}\in\C\right\} \subset L^2(\Wast; H^1_0(\Omega_H^\Lambda))
\end{equation}
for $j=1,\dots,N$, and a corresponding subspace of functions that vanish on $\Gamma^\Lambda$, 
\begin{equation} \hspace*{-2mm}
\tilde{Y}_{N,h}^{(j)} =\left\{v_{N,h}^{(j)}(\alpha,x) = \sum_{l=1}^{{M'}} v_{N,h}^{(j,l)}\psi_{N}^{(j)}(\alpha)\phi_{{M'}}^{(l)}(x):\,v_{N,h}^{(j,l)}\in\C, \, v_{N,h}^{(j,l)} = 0 \text{ if } \left. \phi_{M'}^{(l)} \right|_{\Gamma^\Lambda} \not \equiv 0 \right\}. 
\end{equation}
Thus, setting  
\begin{equation}
Y^0_{N,h}=Y_{N,h}^{(1)} \oplus\cdots \oplus Y_{N,h}^{(N)}
\quad\text{ and } \quad 
\tilde{Y}^0_{N,h}=\tilde{Y}_{N,h}^{(1)} \oplus\cdots \oplus\tilde{Y}_{N,h}^{(N)},
\end{equation}
we note that the solution $w_\Null \in X_{N,h}$ to~\eqref{eq:aux672} can be represented by a unique element $(w_\Null^{(j)})_{j=1}^N$ in $Y_{N,h}^0$ as $w_\Null = \exp(-\i \alpha (\cdot)_1)\sum_{j=1}^N w_\Null^{(j)}$. 
%
%
(Here, $(\cdot)_1$ denotes the first component of the function's argument.)
The inverse Bloch transform $\J_\Omega^{-1}$ applied to $\exp(-\i\alpha (\cdot)_1) w_\Null$ that implicitly appears in~\eqref{eq:aux672} hence equals the numerical inverse Bloch transform $\J_{\Omega,N}^{-1}$ from~\eqref{eq:aux462}, applied to $\big\{ \exp(-\i \alpha_N^{(j)} (\cdot)_1) w_\Null^{(j)}(\alpha_N^{(j)}, \cdot) \big\}_{j=1}^N$, 
\[
  \J_\Omega^{-1} \left( \exp(-\i\alpha (\cdot)_1) w_\Null \right) 
  = \J_{\Omega,N}^{-1} \left( \{ w_{N,h}(\alpha_N^{(j)},\cdot) \}_{j=1}^N \right) 
  = \J_{\Omega,N}^{-1} \left( \{ e^{-\i \alpha_N^{(j)} (\cdot)_1} w_\Null^{(j)}(\alpha_N^{(j)},\cdot) \}_{j=1}^N \right). 
\]
The latter equation actually shows via~\eqref{eq:aux462} how we implement the Bloch transforms in the form $b'$ from~\eqref{eq:aux672} in our numerical examples. 

The Galerkin discretization~\eqref{eq:aux672} can now be reformulated via the tuple $(w_\Null^{(j)})_{j=1}^N \in Y^0_{N,h}$ just introduced, 
\begin{align} \label{eq:aux709}
 & \int_{\alpha_N^{(j)}-\pi/(N\Lambda)}^{\alpha_N^{(j)}+\pi/(N\Lambda)} a_{\alpha}'(w_\Null^{(j)}, \, v_\Null^{(j)}) \,d\alpha 
 + b'\big( \{ w_\Null^{(\ell)} \}_\ell, \, \{ v_\Null^{(\ell)}\}_\ell \big) = 0
\end{align}
for all tuples $(v_\Null^{(1)}, \dots, v_\Null^{(N)}) \in \tilde{Y}^0_{N,h}$ and $j=1,\dots,N$.
The boundary conditions~\eqref{eq:aux811} on $\Gamma^\Lambda$ can be implemented directly at the nodal points of the finite elements: 
To this end, consider all nodal points $x_{M'}^{(M+1)}, \dots, x_{M'}^{(M')}$ on $\Gamma^\Lambda$ and denote the corresponding basis function of $V_h$ by {$\phi_{M'}^{(m)}$, $m=M+1, \dots, M'$}.
The first coordinate of these points hence is $(x_{M'}^{(\ell)})_1$, such that we impose that  
\begin{equation}\label{eq:side}
  \frac{2\pi}{N\Lambda} w_\Null^{(j)}(x_{M'}^{(\ell)}) 
  = \int_{\alpha_N^{(j)}-\pi/(N\Lambda)}^{\alpha_N^{(j)}+\pi/(N\Lambda)} \hspace*{-1.8mm} e^{\i\alpha \big(x_{M'}^{(\ell)}\big)_1}(\J_{\Omega}u^i)(\alpha,x_{M'}^{(\ell)}) \,d\alpha, \quad 
  1\leq j \leq N, \, M+1 \leq \ell \leq M'.
\end{equation} 
For notational simplicity, let us now identify the function $v_\Null^{(j)} \in \tilde{Y}_{N,h}^{(j)}$, which is by construction supported in $[\alpha_N^{(j)} - \pi/(N\Lambda), \alpha_N^{(j)} + \pi/(N\Lambda)] \times \overline{\Omega_H^\Lambda}$, with the element $(0, \dots, v_\Null^{(j)}, \dots 0)$ in $\tilde{Y}_{N,h}^{0}$, and further set $\tilde{a_j}(w, \, v_\Null^{(j)})=\int_{\alpha_N^{(j)}-\pi/(N\Lambda)}^{\alpha_N^{(j)}+\pi/(N\Lambda)} a'_\alpha(w,\,v_\Null^{(j)})\,d\alpha$ as well as   
\begin{equation*}
\tilde{b_j}( \{ w_\Null^{(\ell)} \}_\ell, \, v_\Null^{(j)}) 
= b'\big( \{ w_\Null^{(\ell)} \}_\ell, \,  v_\Null^{(j)} \big) 
\qquad \text{for $j=1,\dots,N$.}
\end{equation*}
Then we get the following discrete variational problem for $(w_\Null^{(j)})_{j=1}^N \in Y_{N,h}^0$ that is equivalent to~\eqref{eq:aux709}, 
\begin{equation*}
  \tilde{a_j}(w_\Null^{(j)}, \, v_\Null^{(j)}) 
  + \tilde{b_j}\big( \{ w_\Null^{(\ell)} \}_\ell, \, v_\Null^{(j)} \big) 
  = 0
  \qquad \text{for $j=1,\dots,N$}
\end{equation*}
and all $(v_0^{(1)}, \dots, v_0^{(N)}) \in \tilde{Y}_{N,h}^0$, together with the constraints
\begin{equation}
 w_\Null^{(j)}(x_{M'}^{(\ell)}) 
 = \frac{N\Lambda}{2\pi}\int_{\alpha_N^{(j)}-\pi/(N\Lambda)}^{\alpha_N^{(j)}+\pi/(N\Lambda)} e^{\i\alpha \big( x_{M'}^{(\ell)} \big)_1}(\J_{\Omega}u^i)(\alpha,x_{M'}^{(\ell)})\,d\alpha 
 =: c_{N,h}^{(j)}(x_{M'}^{(\ell)}) 
\end{equation} 
for $j=1,\dots,N$ and $\ell=M+1,\dots, M'$.
The last equation is due to our choice of the finite element space $V_h$ equivalent to~\eqref{eq:side}. 
%
%
%

Numerically, we actually solve a slightly restructured linear system that relies on a further unknown ${u_h = \sum_{j=1}^{M'} u_{h}^{(j)} \phi_{M'}^{(j)}} \in V_h \subset H^1_0(\Omega_H^\Lambda)$ that equals the (discrete) inverse Bloch transform of $(w_\Null^{(1)}, \dots, w_\Null^{(N)}) \in Y_{N,h}^{0}$. In our discretization, the constraint 
\begin{equation}
\label{eq:invBloch}
u_h (x) 
= \J_{\Omega,N}^{-1} (\{ \exp(-\i \alpha_N^{(j)} \, (\cdot)) w_\Null^{(j)} \}_j) 
\stackrel{\eqref{eq:aux462}}{=} \left[ \frac{\Lambda}{2\pi} \right]^{1/2} \sum_{j=1}^N g_N^{(j)} (x_1) \, e^{-\i \alpha_N^{(j)} x_1} w_\Null^{(j)}(x)  
\end{equation}
is added to the linear system as a constraint for $(w_\Null^{(j)})_{j=1}^N = \big(\sum_{m=1}^{M'} w_{N,h}^{(j,m)} \phi_{M'}^{(m)} \big)_{j=1}^N \in Y_{N,h}^0$. 
(The weight functions $g_N^{(j)}$ have been explicitly defined in~\eqref{eq:aux462}.) 
To this end, recall the $M'$ nodal points $x_{M'}^{(1)}, \dots, x_{M'}^{(M)}$ of the mesh defining the periodic finite element space $V_h$ that are not on $\Gamma^\Lambda$, as well as the $M'-M$ nodes $x_{M'}^{(M+1)}, \dots, x_{M'}^{(M')}$ on $\Gamma^\Lambda$. 
Also recall the basis functions $\phi_{M'}^{(\ell)}$ of $V_h$ linked to these nodal points by the conditions  $\phi_{M'}^{(\ell)}(x_{M'}^{(m)}) = \delta_{\ell,m}$. 
Abbreviating $\phi_m = \phi_{M'}^{(m)}$, the discrete solution $W = (w_\Null^{(1)}, \dots, w_\Null^{(N)}) \in Y_{N,h}^0$ with $w_\Null^{(j)} = \sum_{m} w_{N,h}^{(j,m)} \psi_{N}^{(j)} \phi_{m}$ solves the linear system 
\begin{align}
  \sum_{m=1}^M \tilde{a_j}(\phi_m,\phi_\ell) w_{N,h}^{(\ell,m)} + \sum_{m=1}^M \tilde{b_j}( \{ \phi_n \}_n),  \phi_\ell) u_{h}^{(m)} 
    & = 0 \quad \text{for } \ell=1,\dots,M, \ j=1,\dots,N, \nonumber \\ 
  w_{N,h}^{(\ell,m)} = c_{N,h}^{(j)}(x_M^{(\ell)})
    &  \quad \text{for } \ell=M+1,\dots,M' \ j=1,\dots,N, \nonumber \\    
  u_h^{(m)} - \left[ \frac{\Lambda}{2\pi} \right]^{1/2} \sum_{j=1}^N g_N^{(j)}\big((x_{M'}^{(m)})_1 \big) \, e^{-\i \alpha_N^{(j)} (x_{M'}^{(m)})_1} w_{N,h}^{(j,m)} & =0 \quad \text{for } m=1, \dots, M'.  \label{eq:sys}
\end{align}
If we introduce vectors $U=(u_h^{(1)},\dots,u_h^{(M')})^\top$, $W_j=(w_{N,h}^{(j,1)},\dots,w_{N,h}^{(j,M')})^\top$, and $F_j = (F_j^{(\ell)})_{\ell=1}^{M'}$ where $F_j^{(\ell)}=0$ for $\ell=1,\dots,M$ and $F_j^{\ell}=c_{N,h}^{(j)}(x_M^{(\ell)})$ for $\ell=M+1,\dots,M'$, then \eqref{eq:sys} is equivalent to the quadratic matrix-vector equation 
\begin{equation}
\label{eq:FinalMatrix}
\left(
\begin{matrix}
A_1 & 0 & \cdots & 0 & C_1\\
0 & A_2 & \cdots & 0 & C_2\\
\vdots & \vdots & \vdots & \vdots & \vdots \\
0 & 0 & \cdots & A_N & C_N\\
B_1 & B_2 & \cdots & B_N & I_M 
\end{matrix}
\right)
\left(
\begin{matrix}
W_1\\W_2\\ \vdots \\ W_N\\ U
\end{matrix}
\right)=
\left(
\begin{matrix}
F_1\\F_2\\ \vdots \\F_N\\0
\end{matrix}
\right) \in \C^{(N+1)M'}, 
\end{equation}
{with complex $M'\times M'$-matrices $A_j$  and $C_j$ defined by $A_j(m,l) = \tilde{a_j}(\phi_m,\phi_\ell))$ for $1\leq m \leq M,1\leq \ell \leq M'$ and $A_j(m,\ell) = \delta_{m,\ell}$ else}, as well as $C_j(m,l) = \tilde{b_j}(\phi_m,\phi_\ell)$ for $1 \leq m,\ell \leq M$ and $C_j(m,l) = 0$ else. 
Further, 
\begin{equation*}
B_j = -\left[ \frac{2\pi}{\Lambda}\right]^{1/2}\frac{1}{N}\, \mathrm{diag}\bigg[ g_N^{(j)}\big( \big( x_{M'}^{(1)} \big)_1 \big) e^{-\i\alpha_N^{(j)} \big( x_{M'}^{(1)} \big)_1}, \, \dots, \, g_N^{(j)} \big(\big( x_{M'}^{(M')} \big)_1\big) e^{-\i\alpha_N^{(j)} \big( x_{M'}^{(M')} \big)_1}\bigg]
\end{equation*}
for $j=1,\dots, N$.
Of course, after solving this linear system, we have already computed the discrete inverse Bloch transform $u_h$ of the individual solutions $(w_{N,h}^{(1)}, \dots, w_{N,h}^{(N)})$ via $U$. Multiplying $u_h \in V_h \subset H^1_0(\Omega_H^\Lambda)$ by $\exp(-\i\alpha x_1)$ yields an approximation in $\Omega_H^\Lambda$ to the transformed solution $u_\T = u \circ \Phi_\pert$ to the surface scattering problem from the locally perturbed periodic surface $\Gamma_\pert$ we considered originally. Consequently, $u$ itself can on $\Omega_H^\Lambda$ be approximated via the formula $u = u_\T \circ \Phi_\pert^{-1}$. 

When solving the large linear system \eqref{eq:FinalMatrix} of size $(N+1)M' \times (N+1)M'$, one needs to employ an iterative method, as direct solvers become inefficient (at least without parallelization). We chose the GMRES iteration as solution method and pre-condition it in two steps: 
\begin{enumerate}
\item[(1)] Construct an incomplete LU decomposition $(L_j,U_j)$ for each $M'\times M'$-matrix $A_j$, for $n=1,2,\dots,N$. Define the lower triangular matrix $L={\rm diag}(L_1,\dots,L_N,I_M)$ and the upper triangular matrix $U={\rm diag}(U_1,\dots,U_N,I_M)$.
\item[(2)] Use the GMRES iteration procedure with $(L,U)$ be the pre-conditioner with a tolerance $\epsilon>0$ that we typically choose to be $\epsilon = 10^{-6}$. 
\end{enumerate}
This choice certainly is somewhat preliminary as we did not perform large-scale tests agains other preconditioners, and in particular not against parallelized solvers, to tackle~\eqref{eq:FinalMatrix}. 

\section{Numerical Examples}\label{se:NumEg}
In this section, we give some numerical results for the above-presented Bloch transform based method, discretized in~\eqref{eq:FinalMatrix}, together with error estimates and computation times to indicate efficiency. 
We always choose the incident field as the half-space Dirichlet's Green's function
\begin{equation*}
u^i(x):=G(x,y)=\frac{\i}{4}\left[H^{(1)}_0(k|x-y|)-H^{(1)}_0(k|x-y'|)\right], \quad x \not = y \in \R^2_+,
\end{equation*}
where $y'=(y_1,-y_2)^{\top}$ is the mirror point of the source point $y \in \R^2_+ := \{ \tilde{y} \in \R^2: \, \tilde{y}_2 > 0 \}$. 
The source point $y \in \R^2_+$ is in all experiments located below both the periodic and the locally perturbed surfaces on the one hand, and the $x_1$-axis on the other hand. 
This artificial scattering problem then possesses $G(\cdot,y)$ as an explicit solution, which makes the explicit computation of the error of the resulting solution very simple.

Recall that $\mathrm{sinc}$ is the smooth function defined by $\mathrm{sinc}(t)= \sin(t)/t$ when $t\neq 0$ and $\mathrm{sinc}(0)=1$, fix the period $\Lambda$ as $2\pi$, set $\alpha(j)=j+\alpha \in \R$ and 
\begin{equation*}
\beta(j)=\begin{cases}
\sqrt{k^2-\alpha(j)^2} & \text{ if } \alpha(j) \leq k,\\
\i\sqrt{\alpha(j)^2-k^2} & \text{ if } \alpha(j) > k,
\end{cases}
\end{equation*}
such that the incident field has the form
\begin{equation}
\left(\J_{\R}u^ i\right)(\alpha,x)=\frac{1}{2\pi}\sum_{j\in\Z}e^{\i \alpha(j)(x_1-y_1)+i\beta(j)x_2} \mathrm{sinc}(\beta(j)y_2) \, y_2 \qquad \text{for } x_2 > y_2.
\end{equation}
We give the numerical results for two different periodic surfaces given by
\[
f_1(t)=1+\frac{\sin(t)}{4} 
\quad \text{and} \quad 
f_2(t)=1.9+\frac{\sin(t)}{3}-\frac{\cos(2 t)}{4}.
\]
For each surface, two perturbations are considered:
\begin{align*}
g_1(t) &= \exp\left(\frac{1}{t(t+2)}\right)\left(\cos\left[\frac{\pi(t+2)}{2}\right]+1\right) \qquad \text{for } -2\leq t\leq 0 \text{ and $0$ else, and} \\
g_2(t) &= \exp\left(\frac{1}{(t+1)(t-1)}\right)\sin\left[\pi(t+1)\right] \qquad \text{for } -1\leq t\leq 1 \text{ and $0$ else.}
\end{align*}
The surfaces $\Gamma_1$, $\Gamma_2$, $\Gamma_3$ and $\Gamma_4$ are four locally perturbed periodic surfaces defined by 
\begin{eqnarray*}
&&\Gamma_{1,2} =\{(x,f_1(t)+g_{1,2}(t)):\,x\in\R\} 
\quad \text{and} \quad 
\Gamma_{3,4}=\{(x,f_2(t)+g_{1,2}(t)):\,x\in\R\}.
\end{eqnarray*}

\begin{figure}[hhhtttttt]
\centering
\begin{tabular}{c c}
\includegraphics[width=0.45\textwidth]{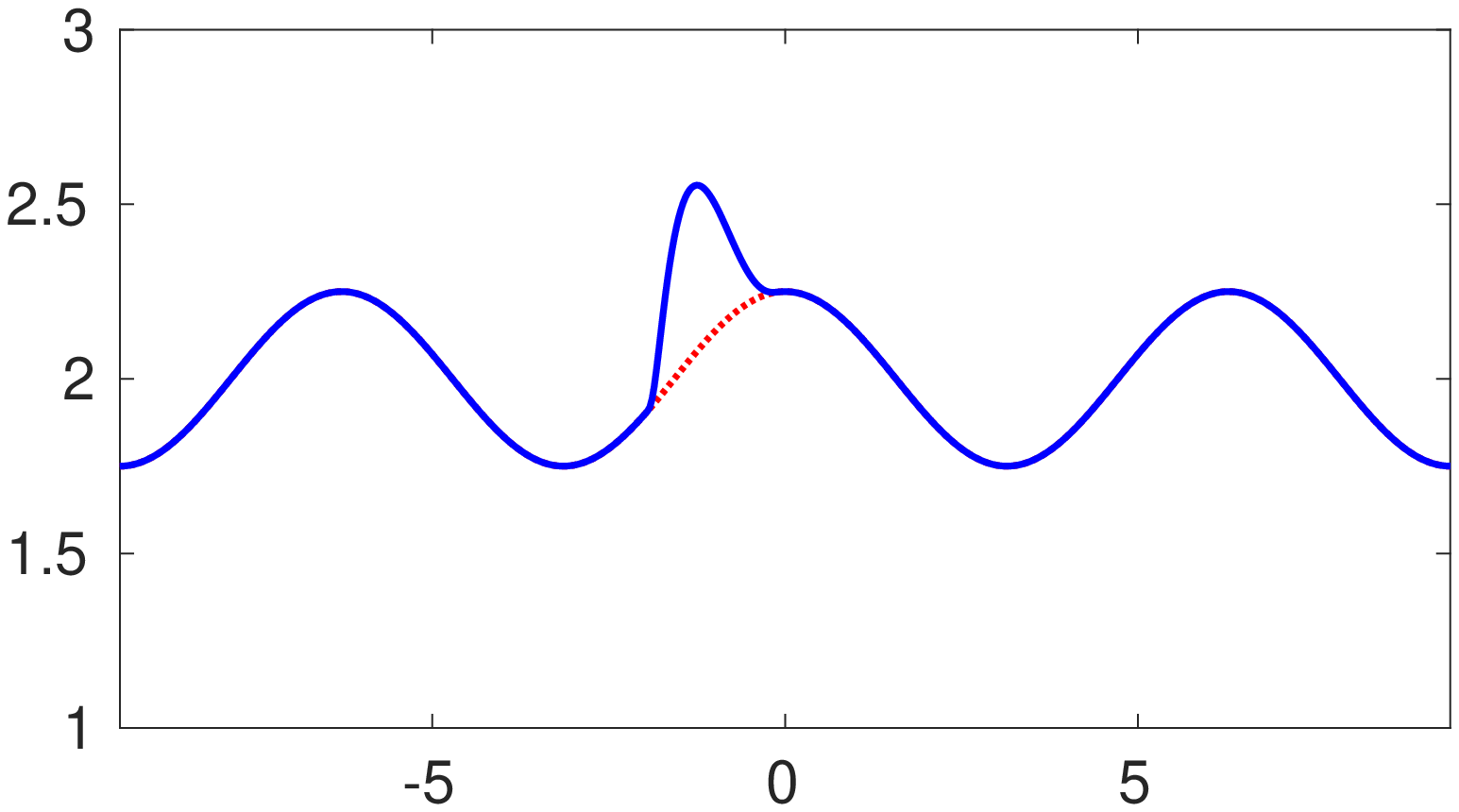}
& 
\includegraphics[width=0.45\textwidth]{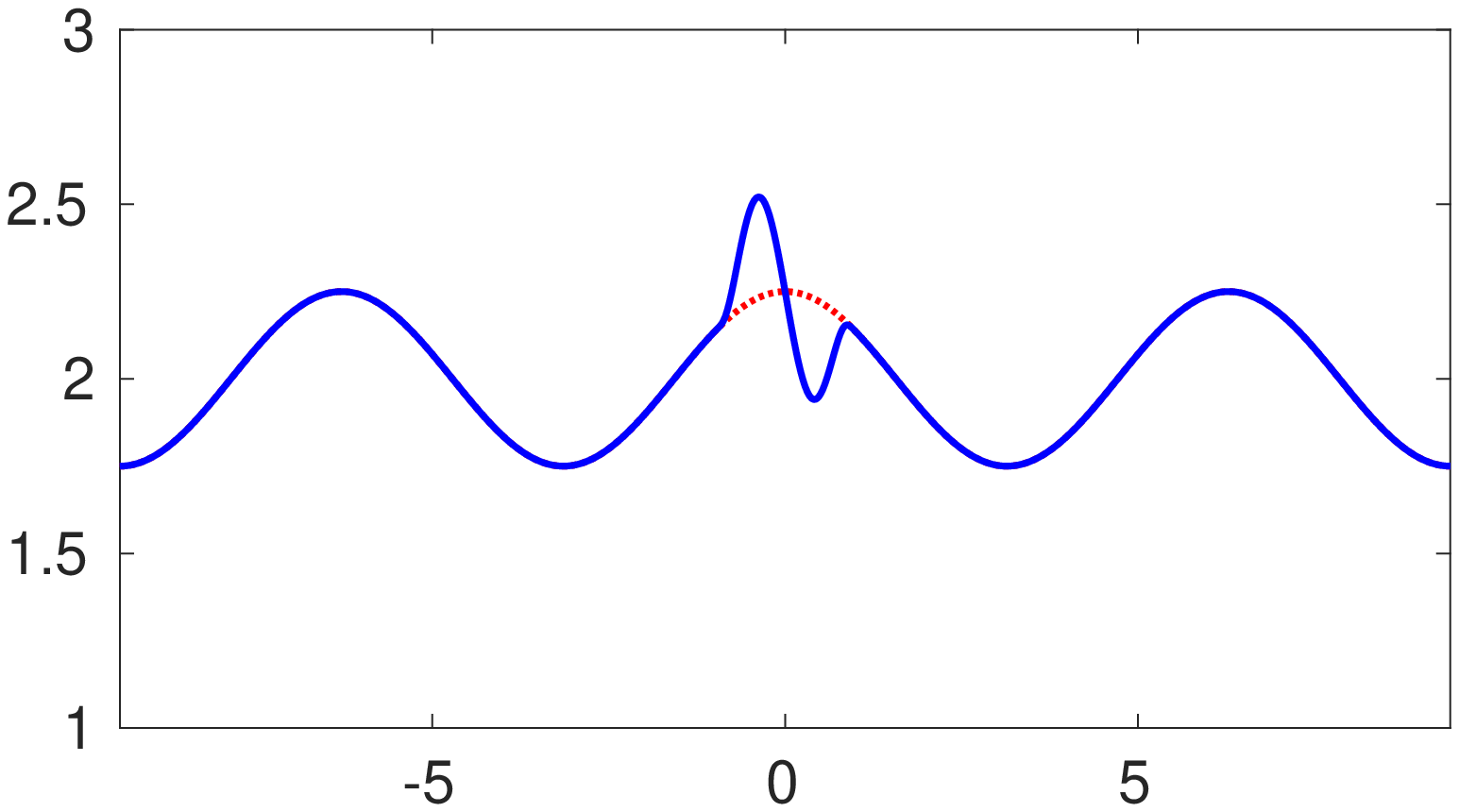}
\\
(a) $f_1+g_1$ & (b) $f_1+g_2$\\
\includegraphics[width=0.45\textwidth]{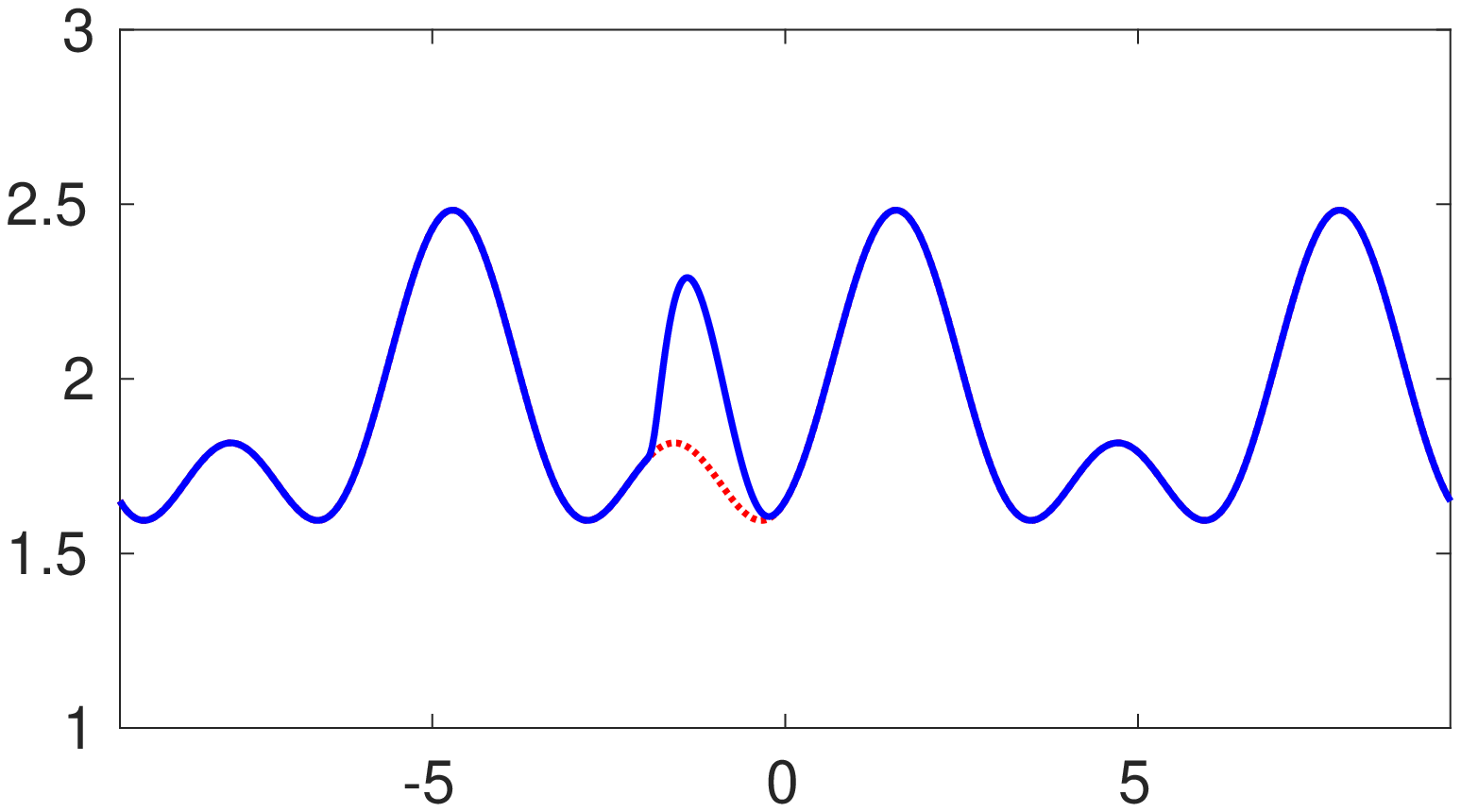}
& 
\includegraphics[width=0.45\textwidth]{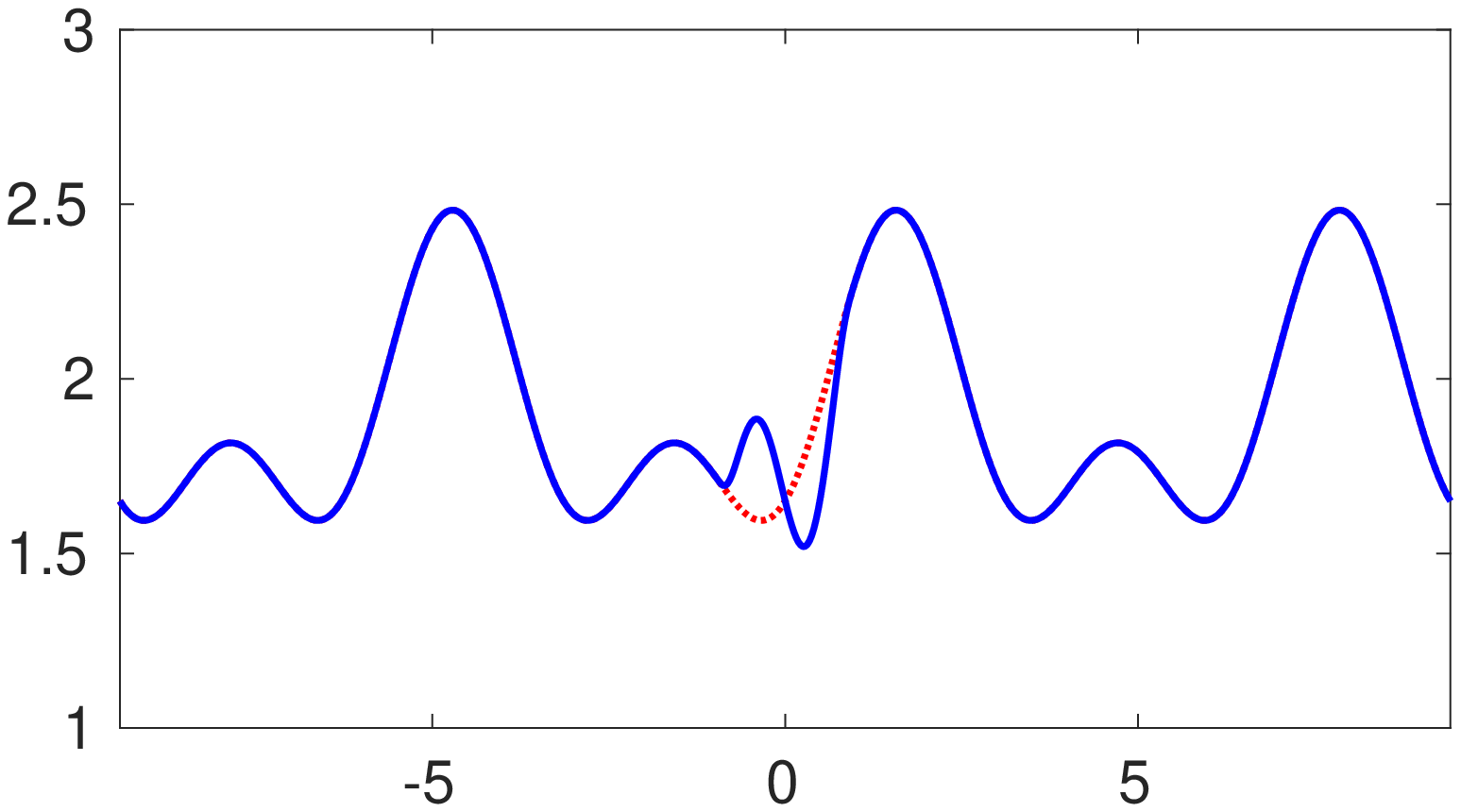}
\\
(c) $f_2+g_1$ & (d) $f_2+g_2$ 
\end{tabular}%
\caption{(a)-(b): The two surfaces $\Gamma_{1,2}$ defined by the unctions $f_1$ and  perturbations $g_1$ and $g_2$; (c)-(d): The two surfaces $\Gamma_{3,4}$ defined by the functions $f_2$ and  perturbations $g_1$ and $g_2$. The dotted lines mark the periodic surfaces and the solid lines mark the locally perturbed ones.}
\end{figure}

For each surface, we chose $H=4$ and evaluated the numerical solutions $u_{N,h}$ on $\Omega^{2\pi}_4$ for two different source points, i.e., $y=(0.5,0.4)^\top$ and $y=(-2,0.2)^\top$ and two wave numbers $k=1$ and $k=10$ by numerically solving the linear system~\eqref{eq:FinalMatrix} by the described preconditioned GMRES algorithm. As mentioned, the exact scattered field equals (minus) the incident field, which allows to compute relative errors $\|u_{N,h}-u\|_{L^2(\Omega^{2\pi}_4)}/\|u\|_{L^2(\Omega^{2\pi}_4)}$.

Table~\ref{surf1p1k1} and Table~\ref{surf1p1k10} show the relative errors for the numerical solutions for the surface $\Gamma_1$, Table~\ref{surf1p2k1} and Table~\ref{surf1p2k10} shows the results for $\Gamma_2$, Table~\ref{surf2p1k1} and Table~\ref{surf2p1k10} show the results for $\Gamma_3$, and the results of $\Gamma_4$ are in Table~\ref{surf2p2k1} and Table~\ref{surf2p2k10}. For each example, the results are computed for mesh sizes $h=0.16, 0.08, 0.04, 0.02, 0.01$ and $N=20, 40, 80,160, 320$.

\begin{table}[hhhtttttt]
\centering
\begin{tabular}
{|p{1.8cm}<{\centering}||p{2cm}<{\centering}|p{2cm}<{\centering}
 |p{2cm}<{\centering}|p{2cm}<{\centering}|p{2cm}<{\centering}|}
\hline
  & $h=0.16$ & $h=0.08$ & $h=0.04$ & $h=0.02$ & $h=0.01$\\
\hline
\hline
$N=20$&$8.55$E$-03$&$7.39$E$-03$&$7.25$E$-03$&$7.22$E$-03$&$7.21$E$-03$\\
\hline
$N=40$&$4.83$E$-03$&$2.90$E$-03$&$2.69$E$-03$&$2.66$E$-03$&$2.65$E$-03$\\
\hline
$N=80$&$3.92$E$-03$&$1.38$E$-03$&$1.03$E$-03$&$1.00$E$-03$&$9.94$E$-04$\\
\hline
$N=160$&$3.73$E$-03$&$9.56$E$-04$&$4.56$E$-04$&$4.11$E$-04$&$4.07$E$-04$\\
\hline
$N=320$&$3.68$E$-03$&$8.57$E$-04$&$2.77$E$-04$&$2.36$E$-04$&$2.32$E$-04$\\
\hline
\end{tabular}
\caption{Relative $L^2$-errors for Example 1 (surface $\Gamma_1$, source at $y=(0.5,0.4)^\top$, $k=1$).}
\label{surf1p1k1}
\end{table}

\begin{table}[hhhtttttt]
\centering
\begin{tabular}
{|p{1.8cm}<{\centering}||p{2cm}<{\centering}|p{2cm}<{\centering}
 |p{2cm}<{\centering}|p{2cm}<{\centering}|p{2cm}<{\centering}|}
\hline
  & $h=0.08$ & $h=0.04$ & $h=0.02$ & $h=0.01$\\
\hline
\hline
$N=20$&$3.79$E$-01$&$1.02$E$-01$&$3.99$E$-02$&$1.35$E$-02$\\
\hline
$N=40$&$3.80$E$-01$&$1.01$E$-01$&$2.84$E$-02$&$7.20$E$-03$\\
\hline
$N=80$&$3.81$E$-01$&$1.01$E$-01$&$2.64$E$-02$&$6.02$E$-03$\\
\hline
$N=160$&$3.81$E$-01$&$1.01$E$-01$&$2.61$E$-02$&$5.82$E$-03$\\
\hline
$N=320$&$3.81$E$-01$&$1.01$E$-01$&$2.32$E$-02$&$5.78$E$-03$\\
\hline
\end{tabular}
\caption{Relative $L^2$-errors for Example 2 (surface $\Gamma_1$, source at $y=(-2,0.2)^\top$, $k=10$).}
\label{surf1p1k10}
\end{table}

\begin{table}[hhhtttttt]
\centering
\begin{tabular}
{|p{1.8cm}<{\centering}||p{2cm}<{\centering}|p{2cm}<{\centering}
 |p{2cm}<{\centering}|p{2cm}<{\centering}|p{2cm}<{\centering}|}
\hline
  & $h=0.16$ & $h=0.08$ & $h=0.04$ & $h=0.02$ & $h=0.01$\\
\hline
\hline
$N=20$&$8.98$E$-03$&$7.40$E$-03$&$7.32$E$-03$&$7.32$E$-03$&$7.32$E$-03$\\
\hline
$N=40$&$5.98$E$-03$&$2.94$E$-03$&$2.69$E$-03$&$2.68$E$-03$&$2.69$E$-03$\\
\hline
$N=80$&$5.50$E$-03$&$1.61$E$-03$&$1.03$E$-03$&$9.67$E$-04$&$9.97$E$-04$\\
\hline
$N=160$&$5.45$E$-03$&$1.34$E$-03$&$5.00$E$-04$&$4.04$E$-04$&$4.00$E$-04$\\
\hline
$N=320$&$5.45$E$-03$&$1.31$E$-03$&$3.76$E$-04$&$2.28$E$-04$&$2.17$E$-04$\\
\hline
\end{tabular}
\caption{Relative $L^2$-errors for Example 3 (surface $\Gamma_2$, source at $y=(0.5,0.4)^\top$, $k=1$).}
\label{surf1p2k1}
\end{table}

\begin{table}[hhhtttttt]
\centering
\begin{tabular}
{|p{1.8cm}<{\centering}||p{2cm}<{\centering}|p{2cm}<{\centering}
 |p{2cm}<{\centering}|p{2cm}<{\centering}|p{2cm}<{\centering}|}
\hline
  & $h=0.08$ & $h=0.04$ & $h=0.02$ & $h=0.01$\\
\hline
\hline
$N=20$&$3.78$E$-01$&$1.03$E$-01$&$2.78$E$-02$&$1.43$E$-02$\\
\hline
$N=40$&$3.77$E$-01$&$1.03$E$-01$&$2.66$E$-02$&$7.20$E$-03$\\
\hline
$N=80$&$3.78$E$-01$&$1.03$E$-01$&$2.65$E$-02$&$7.16$E$-03$\\
\hline
$N=160$&$3.78$E$-01$&$1.03$E$-01$&$2.64$E$-02$&$6.96$E$-03$\\
\hline
$N=320$&$3.78$E$-01$&$1.03$E$-01$&$2.64$E$-02$&$6.92$E$-03$\\
\hline
\end{tabular}
\caption{Relative $L^2$-errors for Example 4 (surface $\Gamma_2$, source at $y=(-2,0.2)^\top$, $k=10$).}
\label{surf1p2k10}
\end{table}

\begin{table}[hhhtttttt]
\centering
\begin{tabular}
{|p{1.8cm}<{\centering}||p{2cm}<{\centering}|p{2cm}<{\centering}
 |p{2cm}<{\centering}|p{2cm}<{\centering}|p{2cm}<{\centering}|}
\hline
  & $h=0.16$ & $h=0.08$ & $h=0.04$ & $h=0.02$ & $h=0.01$\\
\hline
\hline
$N=20$&$8.87$E$-03$&$7.80$E$-03$&$7.69$E$-03$&$7.67$E$-03$&$7.67$E$-03$\\
\hline
$N=40$&$4.91$E$-03$&$3.02$E$-03$&$2.82$E$-03$&$2.80$E$-03$&$2.80$E$-03$\\
\hline
$N=80$&$4.01$E$-03$&$1.45$E$-03$&$1.08$E$-03$&$1.05$E$-03$&$1.04$E$-03$\\
\hline
$N=160$&$3.84$E$-03$&$1.06$E$-03$&$4.75$E$-04$&$4.23$E$-04$&$4.21$E$-04$\\
\hline
$N=320$&$3.81$E$-03$&$9.83$E$-04$&$2.99$E$-04$&$2.21$E$-04$&$2.19$E$-04$\\
\hline
\end{tabular}
\caption{Relative $L^2$-errors for Example 5 (surface $\Gamma_3$, source at $y=(0.5,0.4)^\top$, $k=1$).}
\label{surf2p1k1}
\end{table}

\begin{table}[hhhtttttt]
\centering
\begin{tabular}
{|p{1.8cm}<{\centering}||p{2cm}<{\centering}|p{2cm}<{\centering}
 |p{2cm}<{\centering}|p{2cm}<{\centering}|p{2cm}<{\centering}|}
\hline
  & $h=0.08$ & $h=0.04$ & $h=0.02$ & $h=0.01$\\
\hline
\hline
$N=20$&$3.93$E$-01$&$1.06$E$-01$&$2.88$E$-02$&$1.55$E$-02$\\
\hline
$N=40$&$3.94$E$-01$&$1.06$E$-01$&$2.78$E$-02$&$1.14$E$-02$\\
\hline
$N=80$&$3.94$E$-01$&$1.06$E$-01$&$2.77$E$-02$&$1.08$E$-02$\\
\hline
$N=160$&$3.94$E$-01$&$1.06$E$-01$&$2.77$E$-02$&$1.07$E$-02$\\
\hline
$N=320$&$3.94$E$-01$&$1.06$E$-01$&$2.32$E$-02$&$1.07$E$-02$\\
\hline
\end{tabular}
\caption{Relative $L^2$-errors for Example 6 (surface $\Gamma_3$, source at $y=(-2,0.2)^\top$, $k=10$).}
\label{surf2p1k10}
\end{table}

\begin{table}[hhhtttttt]
\centering
\begin{tabular}
{|p{1.8cm}<{\centering}||p{2cm}<{\centering}|p{2cm}<{\centering}
 |p{2cm}<{\centering}|p{2cm}<{\centering}|p{2cm}<{\centering}|}
\hline
  & $h=0.16$ & $h=0.08$ & $h=0.04$ & $h=0.02$ & $h=0.01$\\
\hline
\hline
$N=20$&$9.23$E$-03$&$7.83$E$-03$&$7.76$E$-03$&$7.76$E$-03$&$7.76$E$-03$\\
\hline
$N=40$&$5.71$E$-03$&$3.06$E$-03$&$2.83$E$-03$&$2.82$E$-03$&$2.83$E$-03$\\
\hline
$N=80$&$5.08$E$-03$&$1.61$E$-03$&$1.07$E$-03$&$1.04$E$-03$&$1.04$E$-03$\\
\hline
$N=160$&$5.01$E$-03$&$1.32$E$-03$&$4.88$E$-04$&$4.03$E$-04$&$4.04$E$-04$\\
\hline
$N=320$&$5.01$E$-03$&$1.29$E$-03$&$3.48$E$-04$&$2.28$E$-04$&$1.93$E$-04$\\
\hline
\end{tabular}
\caption{Relative $L^2$-errors for Example 7 (surface $\Gamma_4$, source at $y=(0.5,0.4)^\top$, $k=1$).}
\label{surf2p2k1}
\end{table}

\begin{table}[hhhtttttt]
\centering
\begin{tabular}
{|p{1.8cm}<{\centering}||p{2cm}<{\centering}|p{2cm}<{\centering}
 |p{2cm}<{\centering}|p{2cm}<{\centering}|p{2cm}<{\centering}|}
\hline
  & $h=0.08$ & $h=0.04$ & $h=0.02$ & $h=0.01$\\
\hline
\hline
$N=20$&$4.36$E$-01$&$1.19$E$-01$&$3.19$E$-02$&$1.64$E$-02$\\
\hline
$N=40$&$4.36$E$-01$&$1.19$E$-01$&$3.09$E$-02$&$1.23$E$-02$\\
\hline
$N=80$&$4.37$E$-01$&$1.19$E$-01$&$3.08$E$-02$&$1.17$E$-02$\\
\hline
$N=160$&$4.37$E$-01$&$1.19$E$-01$&$3.07$E$-02$&$1.15$E$-02$\\
\hline
$N=320$&$4.37$E$-01$&$1.19$E$-01$&$2.64$E$-02$&$1.15$E$-02$\\
\hline
\end{tabular}
\caption{Relative $L^2$-errors for Example 8 (surface $\Gamma_4$, source at $y=(-2,0.2)^\top$, $k=10$).}
\label{surf2p2k10}
\end{table}

As is shown in Tables 1-8, the relative error decreases in $N$ and $h$ up to error stagnation. For wave number $k=1$, the error caused by $N$ is the dominant one, such that the error decrease as $h$ gets small is sometimes comparatively small, see Tables 1, 3, 5, and 7. For $k=10$, this is the exact opposite, see Tables 2, 4, 6, and 8. When $h$ is small enough (see the results for $h=0.01$ in Figures 1, 3, 5, 7), the relative error decreases faster than the rate $O\left(N^{-1}\right)$ than proved theoretically in Theorem~\ref{th:numInvBloch}. In Tables \ref{surf2p1k1t} and \ref{surf2p1k10t}, we also show the computation times of our serial code imlemented in MATLAB for Examples 5 and Example 6 computed on a workstation with an INTEL i7-4790 processor (8 cores at 3.60 GHz) and 32 GB RAM. 
This data excludes the smallest mesh size and the largest discretization of the Brillouin zone $\Wast$ that we merely treated on a comparatively slow workstation with significantly larger memory of 264 GB. 

Finally, we balance the two error terms in the $L^2$-estimate $\|u_{N,h} - u\|_{L^2(\Omega_H^\Lambda)}\leq C [N^{-1}+h^2]$ from~\eqref{eq:errorU} by choosing $h=c_0N^{-1/2}$  for $c_0=2/(5\sqrt{5})$ and $N$ equal to $20,\, 80,\, 320$. The $(N,h)$ pairs are hence $(20,0.04)$, $(80,0.02)$ and $(320,0.01)$. Figure~\ref{fig:rate} shows plots in logarithmic scale of the relative $L^2$-errors for the eight examples from above. The slopes for Examples 1, 3, 5,  and 7 is roughly about $-1.42$, for Example 2, 4, 6, and 8 they are roughly about $-1$. This means that the numerical results converges at the rate of $N^{-1}$ or even faster than shown in Theorem~\ref{th:numInvBloch}.

\begin{figure}[hhhtttttt]
\centering
 \includegraphics[width=0.6\textwidth]{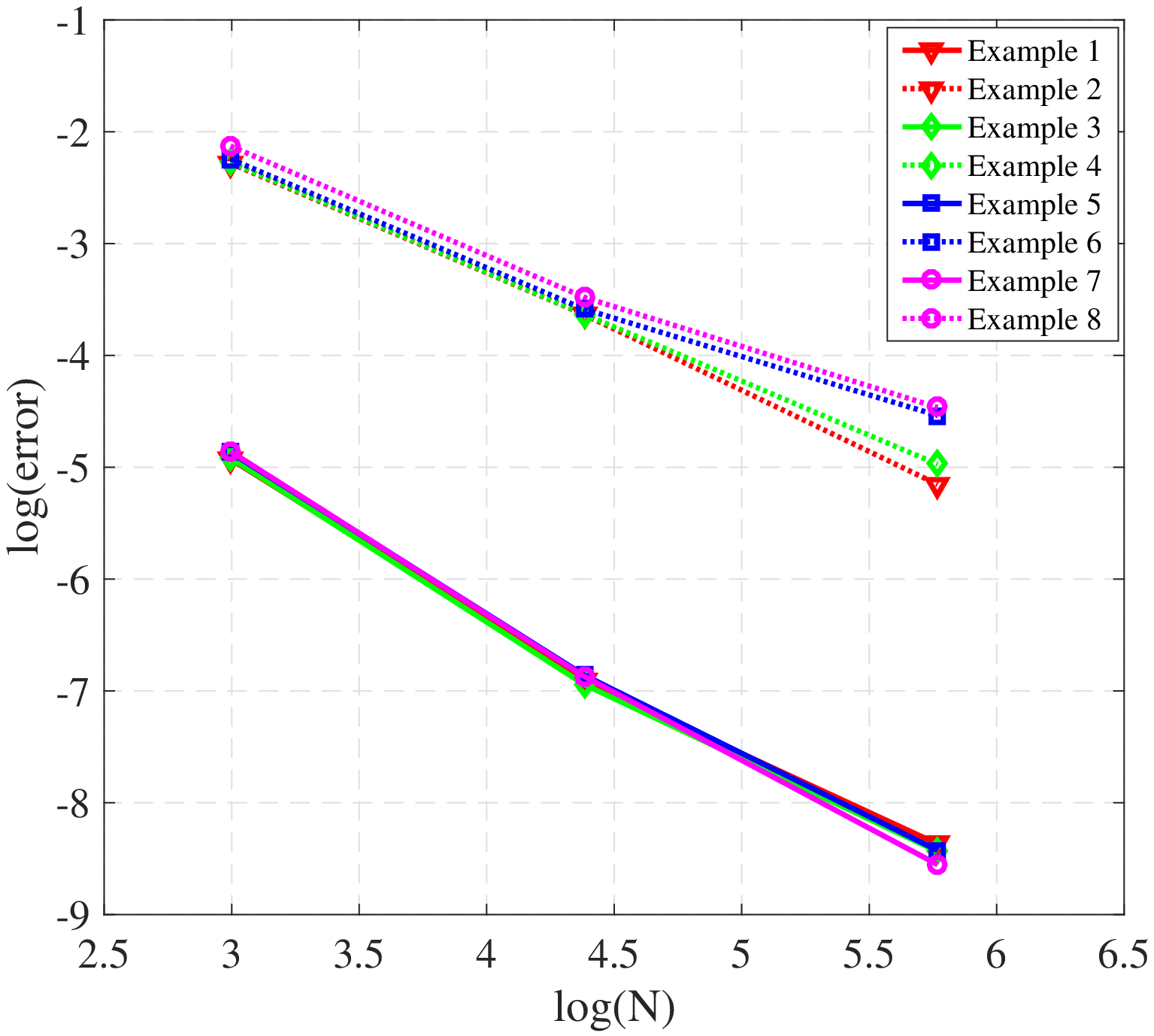}
\caption{The relative $L^2$-errors for the eight considered examples with $h=c_0 N^{-1/2}$ plotted in logarithmic scale over $N$.}
\label{fig:rate}
\end{figure}

\begin{table}[t!!h!b]
\centering
\begin{tabular}
{|p{1.8cm}<{\centering}||p{2cm}<{\centering}|p{2cm}<{\centering}
 |p{2cm}<{\centering}|p{2cm}<{\centering}|p{2cm}<{\centering}|}
\hline
  & $h=0.16$ & $h=0.08$ & $h=0.04$ & $h=0.02$ \\
\hline
\hline
$N=20$& 0.74 & 3.3 & 25 & 277\\
\hline
$N=40$& 1.5 & 7.4 & 55 & 580\\
\hline
$N=80$& 3.7 & 19 & 126 & 1242\\
\hline
$N=160$& 9.3 & 55 & 316 & 2740\\
\hline
\end{tabular}
\caption{Solution time in seconds for Example 3 (surface $\Gamma_3$, source at $y=(0.5,0.4)^\top$, $k=1$).}
\label{surf2p1k1t}
\end{table}

\begin{table}[htb]
\centering
\begin{tabular}
{|p{1.8cm}<{\centering}||p{2cm}<{\centering}|p{2cm}<{\centering}
 |p{2cm}<{\centering}|p{2cm}<{\centering}|p{2cm}<{\centering}|}
\hline
  & $h=0.08$ & $h=0.04$ & $h=0.02$ \\
\hline
\hline
$N=20$& 8.4 & 43 & 341 \\
\hline
$N=40$& 20 & 95 & 691\\
\hline
$N=80$& 46 & 217 & 1500\\
\hline
$N=160$& 120 & 531 & 6524\\
\hline
\end{tabular}
\caption{Solution time in seconds for Example 6 (surface $\Gamma_3$, source at $y=(-2,0.2)^\top$, $k=10$).}\label{surf2p1k10t}
\end{table}

\appendix 

\section{The Floquet-Bloch transform}\label{se:bloch}
We briefly recall mapping properties of the (Floquet-)Bloch transform $\J_\Omega$; standard references on this topic are~\cite{Reed1978} or~\cite{Kuchm1993}, but see also \cite[Annexe B]{Fliss2009}. 
We define that transform on smooth functions $u: \, \overline\Omega \to \C$ with compact support in $\overline\Omega$ by
\begin{equation}
  \label{eq:BlochZeta}
  \J_\Omega u (\alpha,x)
  = \left[ \frac{\Lambda}{2\pi} \right]^{1/2} \sum_{j\in \Z} u\left( \begin{smallmatrix} x_1 +\Lambda j \\ x_2 \end{smallmatrix} \right) e^{\i \, \Lambda j \, \alpha},
  \qquad  x = \left( \begin{smallmatrix} x_1 \\ x_2 \end{smallmatrix} \right) \in \overline\Omega, \, \alpha \in \R.
\end{equation}
(The same transform for functions defined in $\Omega_H$ is denoted by $\J_\Omega$ as well.) 
One easily computes that $\J_\Omega u(\alpha, \cdot)$ is $\alpha$-quasiperiodic with respect to $\Lambda$,
\begin{equation}\label{eq:qpExt}
  \big( \J_\Omega u \big) \big(\alpha, \left( \begin{smallmatrix} x_1 +\Lambda \\ x_2 \end{smallmatrix} \right) \big)
  = \left[\frac{\Lambda}{2\pi}\right]^{1/2} \sum_{j\in \Z} u  \big( \begin{smallmatrix} x_1 + \Lambda(j+1) \\ x_2 \end{smallmatrix} \big) \, e^{-\i \alpha \cdot \Lambda j}
  = e^{-\i  \Lambda \, \alpha} \J_\Omega u(\alpha, x)
\end{equation}
for $x=(x_1,x_2)^\top \in \Omega$. Further, $\J_\Omega u(\cdot, x)$ is for fixed $x$ a Fourier series in $\alpha$ with basis functions $\alpha \mapsto \exp(\i \, \Lambda j \, \alpha)$ that are $\Lambda^\ast = 2\pi/\Lambda$ periodic.
Thus, introducing the unit cell and the Brillouin zone as 
\[
  \W = \bigg(-\frac{\Lambda}{2}, \frac{\Lambda}{2} \bigg]
  \quad \text{and} \quad
  \Wast = \bigg( -\frac{\Lambda^\ast}{2}, \frac{\Lambda^\ast}{2}\bigg]
  = \bigg( -\frac{\pi}{\Lambda}, \frac{\pi}{\Lambda}\bigg]
\]
implies that knowledge of $(\alpha,x) \mapsto \J_\Omega \phi(\alpha, x)$ in $\Wast \times \Omega_\Lambda^H$ (or $\Wast \times \Omega_\Lambda$) defines the Bloch transform $\J_\Omega u(\alpha, x)$ everywhere in $\R \times \Omega_H$ (or $\R \times \Omega$).

This observation reflects in mapping properties of the Bloch transform. To this end, recall the Bessel potential spaces $H^s(\Omega)$ for $s\in \R$, together with their weighted analogues $H^s_r(\R) := \left\{ \phi \in \mathcal{D}'(\R): \, x_1 \mapsto (1+|x_1|^2)^{r/2} \phi(x_1) \in H^s(\R) \right\}$ for $s,r \in \R$, equipped with their natural norms.
The spaces $H^s(\Omega_H)$ are defined analogously and $H^s_r(\Omega_H)$ contains all $u \in \mathcal{D}'(\Omega_H)$ such that $(1+|x_1|^2)^{r/2} u(x) \in H^s(\Omega_H)$.
 
For $s\in\R$ we further rely on the subspace of quasiperiodic functions $H^s_\alpha(W)$ of $\mathcal{D}'(\R, \C)$ (see~\eqref{eq:qpExt} for a definition of $\alpha$-quasiperiodicity). This space contains all $\alpha$-quasiperiodic distributions $\phi$ with finite norm $\| \phi \|_{H^s_\alpha(W)} = ( \sum_{j \in \Z} (1+|j|^2)^s \, |\hat{\phi}(j)|^2 )^{1/2}$ for $\alpha \in \Wast$, where $\hat\phi(j)$ is the $j$th Fourier coefficient of $\phi$ (the dual evaluation between $\phi$ and $\exp(\i (\Lambda^\ast j-\alpha) \, x_1) / | \det \Lambda|^{1/2} $). Functions in these spaces can be represented by their Fourier series, 
\[
  \phi(x_1) 
  = \frac{1}{| \det \Lambda|^{1/2}} \sum_{j\in\Z} \hat{\phi}(j) e^{\i (\Lambda^\ast j-\alpha) \, x_1 }
  \quad \text{ in $H^s_\alpha(W)$.}
\]
The analogous spaces for functions defined in $\Omega_H$ are $H^s_\alpha(\Omega_\Lambda^H) = \{ u \in \mathcal{D}'(\R\times (0,H)): \, u \text{ is $\alpha$-quasiperiodic in $x_1$ and belongs to } H^1(\Omega_H) \}$ with the usual $H^1$-norm on $\Omega_H$. 

Next, we consider all distributions in $\mathcal{D}'(\R \times \Omega_H)$ that are $\Lambda^\ast$-periodic in their first variable $\alpha$ and quasiperiodic in the first component $x_1$ of their second variable $x$, the quasiperiodicity being equal to the first variable.
For integers $r\in \N$ and $s\in \R$, these distributions define norms 
\begin{equation}\label{eq:HrHs}
  \| \psi \|_{H^\ell_\0(\Wast; H^s_\alpha(\Omega_H))}^2
  =  \sum_{\gamma = 1}^\ell \int_\Wast \| \partial^{\gamma}_\alpha \psi(\alpha, \cdot) \|_{H^s_\alpha(\Omega_H)}^2 \d{\alpha},
\end{equation}
and Hilbert space $H^r_\0(\Wast; H^s_\alpha(\Omega_H))$ as set of those distributions with finite $H^\ell_\0(\Wast; H^s_\alpha(\Omega_H))$-norm.
Interpolation in $\ell$ and a duality argument allow to define these spaces for all $\ell \in \R$.

For a regularity result, we actually also require the family of spaces $W^{1,p}_\0(\Wast; \tilde{H}^1_\alpha(\Omega_H^\Lambda))$ for $1 \leq p < \infty$, that are defined by replacing the $H^r$-norm in $\alpha$ by a $W^{1,p}$-norm; the $p$th power of the norm of these spaces equals 
\[
  \| w \|_{W^{1,p}_\0(\Wast; \tilde{H}^1_\alpha(\Omega_H^\Lambda))}^p
  = \int_{\Wast} \left[ \| w(\alpha,\cdot) \|_{\tilde{H}^1_\alpha(\Omega_H^\Lambda))}^p  + \| \partial_\alpha w(\alpha,\cdot) \|_{\tilde{H}^1_\alpha(\Omega_H^\Lambda))}^p \right] \d{\alpha}, 
  \quad 1 \leq p < \infty.
\]

For all spaces introduced so far involving $\Omega_H$, $\Omega_\Lambda^H$, $\Omega$, or $\Omega_\Lambda$ it is convenient to define the closure of smooth functions that vanish in a neighborhood of $\Gamma$ or $\Gamma^\Lambda$ in the norms defined above; the corresponding subspaces are then denoted by $\tilde{H}^s_\alpha(\Omega_\Lambda^H)$, $H^r_\0(\Wast; \tilde{H}^s_\alpha(\Omega_H))$, and so on. 
 
\begin{theorem}\label{th:BlochOmega}
The Bloch transform $\J_\Omega$ extends to an isomorphism between $H^s_r(\Omega_H)$ and $H^r_\0(\Wast; H^s_\alpha(\Omega_H^\Lambda))$ as well as between $\tilde{H}^s_r(\Omega_H)$ and $H^r_\0(\Wast; \tilde{H}^s_\alpha(\Omega_H^\Lambda))$ for all $s,r \in \R$.
Further, $\J_\Omega$ is an isometry for $s=r=0$ with inverse
  \begin{equation}
    \label{eq:JOmegaInverse}
    \left( \J_\Omega^{-1} w \right) \left( \begin{smallmatrix} x_1 + \Lambda j \\ x_2 \end{smallmatrix} \right)
    = \left[ \frac{\Lambda}{2\pi} \right]^{1/2}
    \int_\Wast w(\alpha, x) e^{i \alpha \, \Lambda j} \d{\alpha}
    \qquad \text{for }  x \in \Omega_H^\Lambda.
  \end{equation}
We actually merely consider the inverse Bloch transform in $\Omega_H^\Lambda$, where the exponential factor in the latter integral can be omitted. 
\end{theorem}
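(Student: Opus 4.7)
\noindent\textit{Proof plan.} My plan is to proceed in four stages: establish the $L^2$-isometry at $s=r=0$, handle the spatial regularity $s$ by commuting $\partial_x$ through $\J_\Omega$, treat the quasimomentum regularity $r$ via a Fourier-series argument in $\alpha$, and finally extend to arbitrary real exponents by interpolation and duality.

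For the isometry, I would work on the dense subspace of smooth functions with compact support in $\overline{\Omega_H}$, square the series in~\eqref{eq:BlochZeta}, and integrate over $\Wast\times\Omega_H^\Lambda$. The orthogonality $\int_\Wast e^{\i\Lambda(j-j')\alpha}\d{\alpha}=\Lambda^\ast\delta_{j,j'}$ collapses the double sum, after which the change of variables $y_1=x_1+\Lambda j$---with $\R$ tiled by the intervals $(-\Lambda/2+\Lambda j,\Lambda/2+\Lambda j]$, $j\in\Z$---reassembles the single sum into $\|u\|_{L^2(\Omega_H)}^2$. The formula~\eqref{eq:JOmegaInverse} falls out of the same orthogonality paired against $e^{\i\alpha\Lambda j}$, and density extends both assertions to $L^2(\Omega_H)$.

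For positive integer $s$, the identity $\partial_x^\beta\J_\Omega u=\J_\Omega\partial_x^\beta u$ is immediate from~\eqref{eq:BlochZeta}, since $\J_\Omega$ only translates in $x$ and weights by an $x$-independent exponential; composing with the previous step yields the isomorphism $H^s(\Omega_H)\cong L^2(\Wast; H^s_\alpha(\Omega_H^\Lambda))$. For positive integer $r$, I would read~\eqref{eq:BlochZeta} as the Fourier series of $\J_\Omega u(\cdot,x)$ on $\Wast$ whose $j$-th coefficient is (up to normalization) $u(x_1+\Lambda j,x_2)$; the equivalent Fourier characterization of the periodic $H^r_\0(\Wast;L^2)$-norm then produces
\[
  \|\J_\Omega u\|_{H^r_\0(\Wast;L^2(\Omega_H^\Lambda))}^2
  \asymp \int_{\Omega_H^\Lambda}\sum_{j\in\Z}(1+|j|^2)^r|u(x_1+\Lambda j,x_2)|^2\d{x}.
\]
The same tiling argument together with the elementary equivalence $(1+|j|^2)^r\asymp(1+|y_1|^2)^r$ for $|y_1-\Lambda j|\le\Lambda/2$ identifies the right-hand side with $\|u\|_{H^0_r(\Omega_H)}^2$; combining both regularity directions then covers all integer pairs $s,r\ge 0$.

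To finish, complex interpolation between integer pairs extends the isomorphism to all real $s,r\ge 0$, and duality (using $\J_\Omega^\ast=\J_\Omega^{-1}$ from the isometry) extends it to negative exponents. The $\tilde{H}$-variants are automatic because $\Gamma$ is $\Lambda$-periodic, so every translate $u(\cdot+\Lambda j,\cdot)$ retains the vanishing trace on $\Gamma$. The main obstacle I expect is the $r$-regularity step at non-integer $r$: while the Fourier-series argument is transparent for integer $r$, transferring the weight--regularity equivalence to fractional exponents forces one to check carefully that the scales on both sides are genuine interpolation scales---that the periodic $H^r_\0(\Wast;\cdot)$ really is the complex interpolation of its integer-$r$ members under the conventions of~\eqref{eq:HrHs}, and that the weighted $H^0_r(\Omega_H)$ is its conjugate counterpart---so that the isomorphism established at integer $(s,r)$ actually interpolates to the stated spaces.
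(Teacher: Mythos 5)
The paper itself does not prove Theorem~\ref{th:BlochOmega}: it is quoted as a known result, with the proof delegated to the standard references (Reed--Simon, Kuchment, and \cite[Annexe B]{Fliss2009}, as well as the authors' earlier works \cite{Lechl2015e,Lechl2016a}). So there is no in-paper argument to compare against; judged on its own, your proof is the standard one from those references and is essentially correct. The $L^2$-isometry via orthogonality of $\alpha\mapsto e^{\i\Lambda j\alpha}$ over $\Wast$ and the tiling of $\R$ by $\Lambda$-cells, the commutation of $\partial_x$ with $\J_\Omega$ for the $s$-direction, the reading of $j\mapsto u(x_1+\Lambda j,x_2)$ as Fourier coefficients in $\alpha$ together with $(1+|j|^2)^r\asymp(1+|y_1|^2)^r$ on the $j$-th cell for the $r$-direction, and the periodicity of $\Gamma$ for the $\tilde H$-variants are all the right ingredients. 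Two cautions. First, when you combine the two regularity directions for general integer pairs $(s,r)$ you should say a word about the commutator between the weight $(1+|x_1|^2)^{r/2}$ and the spatial derivatives defining $H^s_r(\Omega_H)$; since $\partial_{x_1}(1+|x_1|^2)^{r/2}\lesssim(1+|x_1|^2)^{(r-1)/2}$ these Leibniz terms are lower order and only norm equivalence (not equality) survives, which is all the theorem claims. Second, you have correctly located the genuinely delicate step: that $\{H^\ell_\0(\Wast;H^s_\alpha(\Omega_H^\Lambda))\}_\ell$ and $\{H^s_r(\Omega_H)\}_r$ are each complex interpolation scales compatible with the conventions of~\eqref{eq:HrHs}, and that duality on a domain with boundary pairs $H^s$ with the tilde space of the opposite order; this is exactly the content one must either verify or cite from the listed references. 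A final minor point: the paper's formulas are not mutually consistent in sign --- \eqref{eq:BlochZeta} carries $e^{+\i\Lambda j\alpha}$ while \eqref{eq:qpExt} and \eqref{eq:JOmegaInverse} presuppose the $e^{-\i\Lambda j\alpha}$ convention --- so in your orthogonality computation for the inverse formula you should fix one convention throughout, or the pairing against $e^{\i\alpha\Lambda j}$ returns $u(x_1-\Lambda j,x_2)$ instead of $u(x_1+\Lambda j,x_2)$.
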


Finally, we introduce Sobolev spaces on $\Gamma_H$ and $\Gamma_H^\Lambda = \{  x \in \Gamma_H: \, x_1 \in \W \} \subset \Gamma_H$ by identifying $\Gamma_H$ with $\R$ and $\Gamma_H^\Lambda$ with $\W$.
The resulting spaces are then denoted by $H^s_r(\Gamma_H)$, $H^s_\alpha(\Gamma_H^\Lambda)$, and $H^r_\0(\Wast; H^s_\alpha(\Gamma_H^\Lambda))$ for $s,r\in \R$.
For $s=\pm 1/2$ it is well-known that these are natural trace spaces of volumetric $H^1$-spaces, see~\cite{McLea2000}.

\section*{Acknowledgements.} The second author was supported by the University of Bremen and the European Union FP7 COFUND under grant agreement n$^\circ{}\,$600411.


\end{document}